\newcommand{\Real}{\mathbb{R}}
\newcommand{\Integer}{\mathbb{Z}}
\newcommand{\Natural}{\mathbb{N}}
\newcommand{\Complex}{\mathbb{C}}
\newcommand{\lpz}{l^p(\mathbb{Z})}
\newcommand{\N}{\mathbb{N}}
\newcommand{\todo}[1]{{\sffamily To do:}}
\newcommand{\NN}{\mathcal{N}}
\newcommand{\vectornorm}[1]{\|#1\|}
\newtheorem{theorem}{Theorem}[section]
\newtheorem{proposition}[theorem]{Proposition}
\newtheorem {lemma}[theorem]{Lemma}
\newtheorem{examples}[theorem]{Examples}
\newtheorem{example}[theorem]{Example}
\newtheorem{definition}[theorem]{Definition}
\newtheorem{remark}[theorem]{Remark}
\newenvironment{proof}{{\flushleft \emph{Proof}:}}{\hfill\ding{110}\\}
\numberwithin{equation}{section}
\begin{document}

\title{On the distribution of the discrete spectrum of nuclearly perturbed  operators in Banach spaces}
\date{}
\maketitle
Michael Demuth,\\
Institute of Mathematics,\\Technical University of Clausthal,\\ 38678 Clausthal-Zellerfeld\\
\url{michael.demuth@tu-clausthal.de}\\
\newline
Franz Hanauska,\\
Institute of Mathematics,\\Technical University of Clausthal,\\ 38678 Clausthal-Zellerfeld\\
\url{franz.hanauska@tu-clausthal.de}\\



{\bf Abstract:}
Let $Z_0$ be a bounded operator in a Banach space $X$ with purely essential spectrum and $K$ a nuclear operator in $X$. Using methods of complex analysis we study the discrete spectrum of $Z_0+K$ and derive a Lieb-Thirring type inequality. We obtain estimates for the number of eigenvalues in certain regions of the complex plane and an estimate for the asymptotics of the eigenvalues approaching to the essential spectrum of $Z_0$.\\
\newline 
{\bf Mathematics Subject Classifictaion (2010):} 47A75, 47A10, 47A55, 47B10.\\
\newline
{\bf Keywords:}
Eigenvalues, discrete spectrum, nuclear perturbations.


\newpage

\section{Introduction}
In the present article we analyze the discrete  spectrum of a linear, bounded operator $Z=Z_0+K$, where $Z_0$ is a bounded operator with purely essential spectrum and $K$ is a nuclear perturbation. If $X$ is a Hilbert space and if $K$ is in some Neumann-Schatten class this problem was well studied during the last years, see for instance for general non-selfadjoint operators Demuth, Hansmann and Katriel \cite{demuth}, for non-selfadjoint perturbations of selfadjoint operators Hansmann \cite{hansmann2012}, for Schr\"odinger operators Frank \cite{frank}, Laptev and Safronov \cite{laptev}, Safronov \cite{safronov}, Hansmann \cite{hansmann2011} and for Jacobi operators Borichev, Golinskii and Kupin \cite{borichev}, Favorov and Golinskii \cite{favorov}, Golinskii and Kupin \cite{golinskii} and Hansmann and Katriel \cite{hansmannkatriel}\\
It turns out that we can generalize the theory known for Hilbert spaces if we can prove the so-called nuclear determinant, $\det\big(\mathds{1}-K(z\mathds{1}-Z_0)^{-1}\big)$ to be an analytic function on the resolvent set of $Z_0$.\\
In Section 2 we explain the method of the proof which uses substantially the behaviour of the zeros of holomorphic functions defined in the open unit disc. \\
In Section 3 the main result is the holomorphy of the nuclear determinant. \\
In the last section we apply the results to the discrete Laplacian $\Delta_p$ in $l^p(\Integer)$. It turns out that 
\begin{align*}
\sum_{z\in \sigma_{disc}(\Delta_p+K)} \frac{\text{dist}\big(z,\sigma_{ess}(\Delta_p+K)\big)^{3+\tau}}{|z^2-4|}\leq c(\tau)\|K\|_{\NN}^2,
\end{align*}
with some $\tau>0$, a constant $c(\tau)>0$ and where $\|\cdot\|_{\NN}$ denotes the norm in the space of nuclear operators in $X$. This inequality can be used to estimate the number of the eigenvalues in certain parts of the complex plane or to estimate the possible asymptotics if the eigenvalues approach to $\sigma_{ess}(\Delta_p+ K)=[-2,2]$ (see Chapter 5).

\section{Objective and Motivation}
Let $X$ be a complex Banach space and $Z_0$ a bounded operator on $X$ with purely essential spectrum ($\sigma_{ess}(Z):=\{\lambda\in\Complex: \lambda-Z$ is not a Fredholm operator$\}$, where an operator $A$ is Fredholm if $A$ has closed range and both, the kernel and the cokernel of $A$ are finite dimensional) which is equal to an intervall, i.e. $\sigma(Z_0)=\sigma_{ess}(Z_0)=[a,b]$. We denote by $(\lambda\mathds{1}-Z_0)^{-1}=:R_{Z_0}(\lambda), \lambda \in\rho(Z_0):=(\sigma(Z_0))^c$ (resolvent set) the resolvent of $Z_0$. \\
We perturb $Z_0$ by a nuclear operator $K$ and define
\begin{align*}
Z:=Z_0+K.
\end{align*}
We are interested in the distribution of the discrete spectrum ($\sigma_{disc}(Z):=\{\lambda\in\Complex: \lambda$ is a discrete eigenvalue of $Z\}$, where an eigenvalue is discrete if it is isolated and its corresponding Riesz projection is of finite rank) of $Z$. For the sake of completeness we repeat here the definition of nuclear operators.
\begin{definition}\label{2.1}
Let $K$ be a compact operator in $\mathcal{B}(X)$ (the space of linear bounded operators). $K$ is called {\bf nuclear} if there are sequences (not necessarily unique) $\{f_n\}\subseteq X$, $\{\phi_n\}\subseteq X^*$ (the dual of $X$) such that $Kf$ can be represented by 
\begin{align*}
Kf = \sum_{n=1}^\infty \langle \phi_n, f \rangle f_n 
\end{align*}
for all $f \in X$ and

\begin{align*}
\sum_{n=1}^\infty \|\phi_n\|_{X^*}\|f_n\|_X < \infty.
\end{align*}
We denote this class by $\mathcal{N}(X)$
\end{definition}

In $\mathcal{N}(X)$ a norm can be defined by 
\begin{align*}
\|K\|_{\NN}:=\inf\{\sum_{n=1}^\infty \|\phi_n\|_{X^*}\|f_n\|_X \, : \, Kf=\sum_{n=1}^\infty \langle\phi_n, f\rangle f_n\text{ for all }f\in X\}.
\end{align*}
With this norm $\NN(X)$ becomes a Banach ideal (see Pietsch \cite{pietsch}, p. 64).

\begin{examples}\label{beispiel}
\quad (a) Let $\{e_k\}_{k\in\Integer}$ be the standard basis in $l^p(\Integer)$ with $1\leq p\leq \infty$. Denote by $\phi_m$ the sequence $\phi_m=\{a_{mj}\}_{j\in\Integer}\in l^q(\Integer)$ ($\frac{1}{p}+\frac{1}{q}=1$). Assuming $\{\|\phi_m\|_q\}_{m\in\mathbb{Z} }\in l^1(\Integer)$, then the operator $K:\lpz\rightarrow \lpz$ defined by $Kf:=\sum_{m\in\Integer}\langle \phi_m,f\rangle e_m$ is nuclear. The corresponding infinite matrix is given by $(a_{mj})_{m,j\in\Integer}$.\\
We can conclude that every diagonal operator which is defined by an infinite matrix
$\text{diag}(...,d_{-1},d_0,d_1,...)$ is nuclear if $\{d_n\}_{n\in\Integer}\in l^1(\Integer)$.\\
\newline
(b) Every integral operator 
\begin{align*}
K:C([\alpha,\beta])\rightarrow C([\alpha,\beta]),\, (Kf)(t):=\int_\alpha^\beta k(t,s)f(s)ds
\end{align*}
 with continuous kernel $k$ is nuclear and $\|K\|_{\mathcal{N}}=\int_\alpha^\beta \max_t |k(t,s)|ds$ (see Gohberg, Goldberg and Krupnik \cite{gohberg}, Chapter 2 Theorem 2.2).
\end{examples}

\begin{remark}
If $X$ is a Hilbert space $\NN(X)$ coincides with the ideal of trace class operators. In this case we know that the eigenvalues are summable. However, there are Banach spaces and nuclear operators with non summable eigenvalues (see e.g. Gohberg, Goldberg and Krupnik \cite{gohberg} p. 102).\\

In general one has the following estimate:\\
Let $\{\lambda_n(K)\}$ be the eigenvalues of the nuclear operator $K$, then
\begin{align}
\sum_{n=1}^\infty |\lambda_n(K)|^2 \leq \|K\|_{\NN}^2,\label{absch.nukl}
\end{align}
(see e.g. Pietsch \cite{pietsch}, p. 160).
\end{remark}

\begin{example}\label{nuclearnothibert}
If $X_1$ and $X_2$ are compatible Banach spaces and if $K_1$ and $K_2$ are consistent compact operators acting in $X_1$ and $X_2$ then (see \cite{davies} p. 107).
\begin{align*}
\sigma(K_1)=\sigma(K_2).
\end{align*}
We know that for $1\leq p_1,p_2<\infty$ the spaces $l^{p_1}(\mathbb{N})$ and $l^{p_2}(\mathbb{N})$ are compatible.\\ 
Now let $K_1$ be an operator on $l^1(\mathbb{N})$ and $K_2$ be an operator on $l^2(\mathbb{N})$ and let $K_1$ and $K_2$ be consistent. If the eigenvalues of $K_1$ are square summable the same is true for $K_2$.
Now let $K_2$ be an operator defined on $l^2(\mathbb{N})$ which is consistent to a nuclear operator $K_1$ defined on $l^1(\mathbb{N})$. Then $K_2$ is not automatically a Hilbert-Schmidt operator or a trace-class operator.\\
To check this we define the infinite matrix
\begin{align*}
(a_{km})_{k,m\in\mathbb{N}}:=
\begin{pmatrix}
2^{-1} & 2^{-1} & 2^{-1} & \dots\\
2^{-2} & 2^{-2} & 2^{-2} & \dots\\
2^{-3} & 2^{-3} & 2^{-3} & \dots\\
\vdots & \vdots & \vdots & 
\end{pmatrix}
\end{align*}
and define with this matrix the operators $K_1$ and $K_2$.\\
For $K_1$ the nuclear norm is $\|K_1\|_{\mathcal{N}}=\sum_{k=1}^\infty \sup_m |a_{km}|$ (see \cite{gohberg}, Chapter V Theorem 2.1). So we have
\begin{align*}
\|K_1\|_{\mathcal{N}}=\sum_{k=1}^\infty 2^{-k}=1
\end{align*}
such that $K_1$ is in fact a nuclear operator.\\
$K_2$ is a Hilbert-Schmidt operator on $l^2(\mathbb{N})$ iff the sum $\sum_{j=1}^\infty\|K_2e_j\|_2$ is finite, where $(e_j)$ is the orthonormal standard basis in $l^2(\mathbb{N})$ (see \cite{gohberg}, Chapter IV Theorem 7.1).In the present example
\begin{align*}
\sum_{j=1}^\infty \|K_2e_j\|_2^2=\sum_{j=1}^\infty \|(2^{-k})\|_2^2=\infty,
\end{align*}
that means $K_2$ is not a Hilbert-Schmidt operator and hence not a trace class operator.
\end{example}

Because every nuclear operator $K$ is compact $\sigma_{ess}(Z)=\sigma_{ess}(Z_0)$ (see \cite{gohberg1} Chapter XI Theorem 4.2) and the spectrum of $Z$ is the disjoint union of $\sigma_{ess}(Z)$ and $\sigma_{disc}(Z)$.\\
\newline
We are interested in estimates of the form 
\begin{align*}
\sum_{\lambda\in\sigma_{disc}(Z)}\frac{(\text{dist}(\lambda,[a,b]))^\alpha}{|b-\lambda|^\beta|a-\lambda|^\beta}\leq C(\alpha,\beta,a,b)\|K\|_{\NN}^\gamma
\end{align*}
with positive exponents $\alpha,\beta,\gamma\in\Real$.\\
\newline
Instead of studying $\sigma_{disc}(Z)$ directly we define a holomorphic function in $\mathbb{C}\setminus[a,b]$ the zeros of which coincide with $\{\lambda_n(Z)\}$. Then we study the behaviour of the zeros of holomorphic functions in the unit disc. Finally we transform the problem back and can analyze $\sigma_{disc}(Z)$. The function we have in mind is the determinant of $\mathds{1}-KR_{Z_0}(\lambda)$.

\begin{definition}\label{def2.3}
The {\bf nuclear determinant} (or also called regularized determinant see \cite{gohberg} Chapter IX) of a nuclear  operator $K$ in $X$ is given by 
\begin{align*}
\det(\mathds{1}-K):=\prod_{n=1}^\infty \big(1-\lambda_n(K)\big)\exp\big(\lambda_n(K)\big.)
\end{align*}
where $\{\lambda_n(K)\}$ are again the eigenvalues of $K$.
\end{definition}
The determinant has some important properties used in this article which we summarize here.

\begin{lemma}\label{2.5}
Let $K \in \NN(X)$. Then 
\begin{enumerate}
\item[(i)] $|\det(\mathds{1}-K)|\leq \exp\big(\frac{1}{2}\|K\|_{\NN}^2\big)$, which implies the existence of the determinant.
\item[(ii)] $\det(\mathds{1}-K)=0$ iff $\,\lambda_n(K)=1$ for some $n\in\Natural$.
\item[(iii)] $\det(\mathds{1}-K)=0$ iff $\,\mathds{1}-K$ is not invertible.
\end{enumerate}
\end{lemma}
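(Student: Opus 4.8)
The plan is to reduce the whole lemma to one elementary scalar estimate on the primary factor $E(z):=(1-z)e^z$ together with the eigenvalue bound \eqref{absch.nukl}. The single fact that drives everything is the pointwise inequality
\[
\big|(1-z)e^z\big|\le \exp\big(\tfrac12|z|^2\big),\qquad z\in\Complex .
\]
To prove it I would use the identity $|1-z|^2=1-2\,\mathrm{Re}(z)+|z|^2$ and set $t:=|z|^2-2\,\mathrm{Re}(z)$, so that $1+t=|1-z|^2>0$ for $z\ne 1$; then
\[
\log\big|(1-z)e^z\big|=\tfrac12\log(1+t)+\mathrm{Re}(z)\le \tfrac12\,t+\mathrm{Re}(z)=\tfrac12|z|^2 ,
\]
where the middle step is just the scalar inequality $\log(1+t)\le t$. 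Granting this, part (i) is immediate: applying the estimate factorwise gives
\[
\big|\det(\mathds{1}-K)\big|=\prod_{n}\big|(1-\lambda_n)e^{\lambda_n}\big|\le \exp\Big(\tfrac12\sum_{n}|\lambda_n(K)|^2\Big)\le \exp\big(\tfrac12\|K\|_{\NN}^2\big),
\]
the last step being exactly \eqref{absch.nukl}.

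For this product to make sense I would first record its convergence. Because $\sum_n|\lambda_n(K)|^2<\infty$ forces $\lambda_n(K)\to 0$, and because the same primary factor obeys the standard Weierstrass estimate $|E(z)-1|\le|z|^2$ for $|z|\le 1$, the tail of $\sum_n|E(\lambda_n)-1|$ is dominated by $\sum_n|\lambda_n|^2$ and hence converges; this yields absolute convergence of the product and, with the bound above, the existence of the determinant. Part (ii) then drops out, since an absolutely convergent product is zero precisely when one factor is zero, and $E(z)=(1-z)e^z$ vanishes if and only if $z=1$ (the exponential never vanishing). Hence $\det(\mathds{1}-K)=0$ iff $\lambda_n(K)=1$ for some $n$.

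For part (iii) I would appeal to the spectral theory of compact operators: a nuclear operator is compact, so by Riesz--Schauder the nonzero spectrum of $K$ consists entirely of eigenvalues. Thus $\mathds{1}-K$ is non-invertible iff $1\in\sigma(K)$, and since $1\ne 0$ this is equivalent to $1$ being an eigenvalue of $K$, i.e. $\lambda_n(K)=1$ for some $n$; combined with (ii) this gives the stated equivalence. The only genuinely delicate point is the complex pointwise inequality in the first step—once the identity $|1-z|^2=1-2\,\mathrm{Re}(z)+|z|^2$ is used, however, it collapses to $\log(1+t)\le t$, and everything else is routine.
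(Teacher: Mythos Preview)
Your proof is correct and follows essentially the same route as the paper: reduce (i) to the scalar bound $|(1-z)e^{z}|\le \exp(\tfrac12|z|^2)$, apply it factorwise, and invoke \eqref{absch.nukl}. The only difference is that the paper simply cites Nevanlinna for the scalar inequality and declares (ii) and (iii) ``obvious'', whereas you supply a self-contained proof of the inequality (via $\log(1+t)\le t$), an explicit convergence argument for the product, and the Riesz--Schauder justification for (iii); this is additional detail rather than a different approach.
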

\begin{proof}
(ii) and (iii) are obvious. (i) follows by the inequality
\begin{align*}
|(1-z)\exp(z)| \leq \exp\bigg(\frac{1}{2}|z|^2\bigg)
\end{align*}
which holds for all $z\in\mathbb{C}$ (see for instance Nevanlinna  \cite{nevanlinna} p. 225).\\
Hence we obtain, using (\ref{absch.nukl}),
\begin{align*}
|\det(\mathds{1}-K)| &\leq \prod_{n=1}^\infty \exp\bigg(\frac{1}{2}|\lambda_n(K)|^2\bigg)\\
&=\exp\bigg(\sum_{n=1}^\infty \frac{1}{2}|\lambda_n(K)|^2\bigg)\\
&\leq\exp\bigg(\frac{1}{2}\|K\|_{\NN}^2\bigg).
\end{align*}
\end{proof}
Let $Z_0$ be as mentioned above and $Z=Z_0+K$, $K\in\NN(X)$, such that $\sigma_{ess}(Z)=\sigma_{ess}(Z_0)=[a,b]$.\\
Take $\lambda_0\in \rho(Z_0)$. Then 
\begin{align*}
(\lambda_0\mathds{1}-Z)R_{Z_0}(\lambda_0) = \mathds{1}-KR_{Z_0}(\lambda_0).
\end{align*}
The operator $\mathds{1}-KR_{Z_0}(\lambda_0)$ is not invertible iff $\lambda_0 \in \sigma_{disc}(Z)$. Because $\NN(X)$ is an ideal $KR_{Z_0}(\lambda_0)\in\NN(X)$. Therefore the determinant 
\begin{align*}
\det\big(\mathds{1}-KR_{Z_0}(\lambda)\big)
\end{align*}
is well defined for any $\lambda \in \rho(Z_0)$. Denote by 
\begin{align*}
d(\cdot):=\det\big(\mathds{1}-KR_{Z_0}(\cdot)\big),
\end{align*}
i.e. the map $\rho(Z_0)\ni\lambda \mapsto \det\big(\mathds{1}-KR_{Z_0}(\lambda)\big)$. The complex number $\lambda_0$ is a zero of $d$ iff $\lambda_0\in \sigma_{disc}(Z)$. Denoting the zero set of $d$ by $\mathcal{Z}(d)$ it follows that $\mathcal{Z}(d)=\sigma_{disc}(Z)$.\\
\newline
\begin{remark}
It is possible to extend the domain of $d$ to $\rho(Z_0)\cup \{\infty\}$ by setting $d(\infty):=1$. This definition makes sense, since $\lim_{\lambda\rightarrow \infty}KR_{Z_0}(\lambda)=0$ (see e.g. Kato \cite{kato}, p. 176) and $\det(\mathds{1}-0)=1$.
\end{remark}
Thus we are able to analyze $\sigma_{disc}(Z)$ by studying the zeros of the function $d$ defined on $\mathbb{C}\setminus[a,b]$.\\
\newline
We will follow the strategy used in \cite{demuth}, that is based on Jensen's identity (see Rudin \cite{rudin}, p. 307) for the zeros of holomorphic functions in the open unit disc $\mathbb{D}$. Let $\phi$ be the conformal map from $\mathbb{D}\setminus \{0\}$ to $\mathbb{C}\setminus[a,b]$ given by 
\begin{align}\label{8.1}
\phi(w)=\frac{b-a}{4}\big(w+w^{-1}+2\big)+a.
\end{align}
Then the new function $h$, given by
\begin{align*}
h(w):=
\begin{cases}
&(d\circ \phi)(w),\quad w\in\mathbb{D}\setminus\{0\}\\
&1,\quad w =0
\end{cases}
\end{align*}
is defined on $\mathbb{D}$. \\
Let $\mathcal{Z}(h)$ be the set of zeros of $h$. If we can show that $h$ is a holomorphic function and since $|h(0)|=1$
\begin{align}\label{8.2}
\sum_{w\in \mathcal{Z}(h), |w|\leq r } \log\big|\frac{r}{w}\big| =\frac{1}{2\pi}\int_0^{2\pi}\log|h(re^{i\theta})|d \theta
\end{align}
with $0<r<1$.
If $h$ is a holomorphic function and if there is a proper estimate for $\log|h(re^{i\theta})|$ such that the left hand side in (\ref{8.2}) gives an effective sum over the zeros of $h$, then we can derive from this sum a new sum over the discrete spectrum of $Z$.\\

For instance if, in the simplest case,
\begin{align}\label{9.1}
\log|h(w)|\leq \frac{C_0}{\big(1-|w|\big)^\alpha}, \quad w \in \mathbb{D},
\end{align}
with $\alpha>0$ and $C_0$ a positive constant, then 
\begin{align}\label{9.2}
\sum_{w\in\mathcal{Z}(h)}\big(1-|w|\big)^{\alpha+\tau+1} \leq C(\alpha,\tau)C_0
\end{align}
for any $\tau>0$ (see \cite{demuth}, Theorem 3.3.1).\\
(\ref{9.2}) implies 
\begin{align}
\sum_{\lambda\in\sigma_{disc}(Z)}\big(1-|\phi^{-1}(\lambda)|\big)^{\alpha+\tau+1} \leq C(\alpha,\tau)C_0.
\end{align}
Obviously by Lemma \ref{2.5} we know, since $\NN(X)$ is a Banach ideal and $R_{Z_0}(\phi(w))$ is bounded, that
\begin{align}\label{9.4}
\log|h(w)|\leq \frac{1}{2}\|K\|_{\NN}^2 \|R_{Z_0}(\phi(w))\|^2.
\end{align}
If we can estimate the resolvent in a similar way that finally (see Golinskii and Kupin \cite{golinskii}, Hansmann and Katriel \cite{hansmannkatriel})
\begin{align}\label{9.5}
\log|h(w)|\leq C_0 \frac{|w|^2}{|w-1|^2|w+1|^2(1-|w|)^2}
\end{align}
then for any $\tau>0$
\begin{align*}
\sum_{\lambda\in\sigma_{disc}(Z)}\frac{(1-|\phi^{-1}(\lambda)|)^{3+\tau}}{|\phi_1^{-1}(\lambda)|^{1+\tau}}|(\phi_1^{-1}(\lambda))^2-1|^{1+\tau}\leq c(\tau)C_0 .
\end{align*}
From this estimate we are able to derive
\begin{align}\label{10.1}
\sum_{\lambda\in\sigma_{disc}(Z)}\frac{\big(\text{dist}(\lambda,[a,b])\big)^{3+\tau}}{|\lambda-a||\lambda-b|}\leq c(\tau)\cdot C_0
\end{align}
(see \cite{demuth}, proof of Theorem 4.2.2).\\
Summarizing this procedure the problems which we have to solve in this context are:
\begin{enumerate}
\item[1)] Prove that the determinant $d(\cdot)=\det\big(\mathds{1}-KR_{Z_0}(\cdot)\big)$ is a holomorphic function in $\mathbb{C}\setminus[a,b]$.
\item[2)] Find an example and an estimate like (\ref{9.5}) such that (\ref{10.1}) will be true.
\end{enumerate}
Both has already been done in Hilbert spaces and for Schatten class perturbations. However here we consider nuclear perturbations in Banach spaces.\\
\newline
One part of this article is the proof of the holomorphy of $d$. This is the content of the next section, which we have not found in the literature.\\
\newline
In the final sections we consider nuclear perturbations of the discrete Laplacian in $l^p(\Integer)$ and certain perturbations by integral operators in $C[\alpha,\beta]$. It turns out that an estimate like (\ref{10.1}) can be verified.

\section{Holomorphy of nuclear determinants}
Let $\lambda \mapsto K(\lambda)$ be a holomorphic family of operators in $\NN(X)$ with $\lambda$ in a domain $\Omega\subseteq \mathbb{C}$. That means for every $\lambda_0\in\Omega$  there is a sequence of nuclear operators $\{K_{n,\lambda_0}\}$ such that 
\begin{align*}
K(\lambda)=\sum_{n=0}^\infty (\lambda-\lambda_0)^n K_{n,\lambda_0}
\end{align*}
and 
\begin{align*}
\sum_{n=0}^\infty|\lambda-\lambda_0|^n\|K_{n,\lambda_0}\|_{\NN} <\infty
\end{align*}
for all $\lambda\in B_{R_{\lambda_0}}:=\{\lambda\,:\,|\lambda-\lambda_0|<R_{\lambda_0}\}$, where $R_{\lambda_0}$ is the radius of convergence depending only on $\lambda_0$. \\
\newline
We intend to show that the map (see Definition \ref{def2.3})
\begin{align*}
\lambda \mapsto \det\big(\mathds{1}-K(\lambda)\big)
\end{align*}
is holomorphic in $\Omega$ (Pietsch \cite{pietsch} showed this for multiplicative determinants. In our situation the determinant is not multiplicative.).

\begin{remark}
Let $\mathcal{F}(X):=\{F\in\mathcal{S}_\infty (X): \dim(\text{ran}(F))<\infty\}$ ($\mathcal{S}_\infty(X)$ denotes the ideal of compact operators on $X$). Recall that $\mathcal{F}(X)$ is dense in $\big(\NN(X),\|\cdot\|_{\NN}\big)$. This fact will be used in the following.
\end{remark}

\begin{lemma}\label{lemma3.1}
Let $\lambda \mapsto K(\lambda)$ be an analytic nuclear operator valued function on a domain $\Omega \subseteq \Complex$.\\
Then for every $\lambda_0\in\Omega$ there is a sequence of analytic mappings $\lambda\mapsto K_{n,\lambda_0}(\lambda) \in \mathcal{F}(X)$ such that $K_{n,\lambda_0}(\cdot)$ converges locally uniformly to $K(\cdot)$ in $B_{R_{\lambda_0}}$ (with respect to the nuclear norm). Moreover, for every $n\in\Natural$ there is a linear subspace $M_{n,\lambda_0}\subseteq X$ of finite dimension such that 
\begin{align*}
K_{n,\lambda_0}(\lambda)X = K_{n,\lambda_0}(\lambda)M_{n,\lambda_0}\subseteq M_{n,\lambda_0}
\end{align*}
for all $\lambda \in B_{R_{\lambda_0}}$.
\end{lemma}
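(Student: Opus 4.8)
The plan is to build the approximants by a double truncation---one in the power-series index and one in the rank of each Taylor coefficient---and then to assemble the invariant subspace by hand. First I would fix $\lambda_0\in\Omega$, abbreviate $R:=R_{\lambda_0}$ and $A_k:=K_{k,\lambda_0}$, so that $K(\lambda)=\sum_{k=0}^\infty(\lambda-\lambda_0)^k A_k$ with $\sum_k r^k\|A_k\|_{\NN}<\infty$ for every $r<R$. Since $\mathcal{F}(X)$ is dense in $\big(\NN(X),\|\cdot\|_{\NN}\big)$ (the Remark preceding the lemma), for each $k$ I can choose a finite-rank operator $A_k^{(n)}$ with $\|A_k-A_k^{(n)}\|_{\NN}\leq \|A_k\|_{\NN}/n$, taking $A_k^{(n)}=0$ whenever $A_k=0$. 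I then define the candidate approximant as the polynomial
\[
K_{n,\lambda_0}(\lambda):=\sum_{k=0}^n(\lambda-\lambda_0)^k A_k^{(n)}.
\]

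Each $K_{n,\lambda_0}$ is a polynomial in $\lambda$ with finite-rank coefficients, hence an entire $\mathcal{F}(X)$-valued map, so the analyticity and finite-rank requirements are immediate. For local uniform convergence I would estimate, on the closed disc $\{|\lambda-\lambda_0|\leq r\}$ with $r<R$,
\[
\|K(\lambda)-K_{n,\lambda_0}(\lambda)\|_{\NN}\leq \sum_{k=0}^n r^k\|A_k-A_k^{(n)}\|_{\NN}+\sum_{k>n}r^k\|A_k\|_{\NN}\leq \frac{1}{n}\sum_{k=0}^\infty r^k\|A_k\|_{\NN}+\sum_{k>n}r^k\|A_k\|_{\NN}.
\]
The first term is $C_r/n$ with $C_r:=\sum_k r^k\|A_k\|_{\NN}<\infty$, and the second is the tail of a convergent series; both tend to $0$ as $n\to\infty$ uniformly on the disc. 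Since every compact subset of $B_R$ sits inside such a disc, this yields the claimed locally uniform convergence in nuclear norm.

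It remains to produce the finite-dimensional subspace $M_{n,\lambda_0}$. Writing each coefficient in finite-rank form $A_k^{(n)}=\sum_j\langle\psi_{k,j},\cdot\rangle g_{k,j}$, I observe that
\[
K_{n,\lambda_0}(\lambda)x=\sum_{k=0}^n(\lambda-\lambda_0)^k\sum_j\langle\psi_{k,j},x\rangle\,g_{k,j}
\]
depends on $x$ only through the finitely many scalars $\langle\psi_{k,j},x\rangle$, and crucially these functionals do not depend on $\lambda$. Let $R_n:=\operatorname{span}\{g_{k,j}\}$; then $K_{n,\lambda_0}(\lambda)X\subseteq R_n$ for all $\lambda$, which already gives the inclusion $K_{n,\lambda_0}(\lambda)X\subseteq M_{n,\lambda_0}$ and the invariance. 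To secure the equality $K_{n,\lambda_0}(\lambda)X=K_{n,\lambda_0}(\lambda)M_{n,\lambda_0}$, I would consider the linear map $T\colon X\to\mathbb{C}^N$, $x\mapsto(\langle\psi_{k,j},x\rangle)_{k,j}$, whose range is finite-dimensional, pick finitely many $x_1,\dots,x_p\in X$ with $T(x_1),\dots,T(x_p)$ spanning $T(X)$, and set $M_{n,\lambda_0}:=R_n+\operatorname{span}\{x_1,\dots,x_p\}$. Because $K_{n,\lambda_0}(\lambda)x$ is a fixed linear function of $T(x)$ and every $T(x)$ is a combination of the $T(x_i)$, each $K_{n,\lambda_0}(\lambda)x$ already lies in $K_{n,\lambda_0}(\lambda)\operatorname{span}\{x_i\}$, giving the desired equality.

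The step I expect to require the most care is this last one, since $M_{n,\lambda_0}$ must be fixed \emph{independently of} $\lambda$ while $K_{n,\lambda_0}(\lambda)$ varies. This is precisely why I would truncate the power series to a polynomial rather than approximate $K(\lambda)$ directly: truncation confines the defining functionals $\psi_{k,j}$ and range vectors $g_{k,j}$ to a single fixed finite family valid for all $\lambda$, so one subspace $M_{n,\lambda_0}$ serves uniformly. By contrast, the convergence estimate is routine once the weight $r^k$ is absorbed into the choice $\|A_k-A_k^{(n)}\|_{\NN}\leq\|A_k\|_{\NN}/n$.
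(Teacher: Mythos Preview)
Your proof is correct and follows essentially the same approach as the paper: truncate the power series to a polynomial and replace each Taylor coefficient by a finite-rank approximant, then assemble the invariant subspace from the (finitely many) range and ``input'' data of those coefficients. The only cosmetic differences are that the paper chooses the coefficient-wise error as $\tfrac{\epsilon}{2(n_0+1)(R_{\lambda_0}+1)^{n_0}}$ rather than your $\|A_k\|_{\NN}/n$, and it obtains $M_{n,\lambda_0}$ by summing the individual subspaces supplied by the Gohberg--Goldberg--Krupnik lemma for each $A_k^{(n)}$ rather than by your explicit functional-based construction; your treatment of the equality $K_{n,\lambda_0}(\lambda)X=K_{n,\lambda_0}(\lambda)M_{n,\lambda_0}$ is in fact more carefully argued than the paper's.
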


\begin{proof}
It is sufficient to show this for $\lambda_0=0$. Then $K(\lambda)$ has the representation

\begin{align*}
K(\lambda)=\sum_{j=0}^\infty \lambda ^j K_j, \quad K_j \in \mathcal{N}(X).
\end{align*}

Let $\epsilon >0$. Then there is an $n_0\in\N$ with

\begin{align}
\sum_{j=n_0+1}^\infty |\lambda|^j\vectornorm{K_j}_\mathcal{N} < \frac{\epsilon}{2}, \text{ for all } |\lambda |\leq s < R_{\lambda_0}.
\end{align}

For every $j\in\{0,...,n_0\}$ we choose a finite rank operator $K_j^{(n_0)}$ with

\begin{align}
\vectornorm{K_j-K_j^{(n_0)}}_\mathcal{N} < \frac{\epsilon}{2(n_0+1)(R_{\lambda_0}+1)^{n_0}},
\end{align}

and we define 

\begin{align}
K_{n_0}(\lambda)=\sum_{j=0}^{n_0}\lambda^j K_j^{(n_0)}
\end{align}

The function $\lambda \mapsto K_{n_0}(\lambda)$ is analytic.\\
Thus we get

\begin{align}
&\vectornorm{K(\lambda)-K_{n_0}(\lambda)}_\mathcal{N}=\vectornorm{\sum_{j=n_0+1}^\infty \lambda^j K_j+\sum_{j=0}^{n_0}\lambda^j(K_j-K_j^{(n_0)})}_\mathcal{N}\\
&\leq \sum_{j=n_0+1}^\infty |\lambda|^j \vectornorm{K_j}_\mathcal{N}+\sum_{j=0}^{n_0}|\lambda|^j \vectornorm{K_j-K_j^{(n_0)}}_\mathcal{N}\\
&\leq \frac{\epsilon}{2} +
\sum_{j=0}^{n_0}(R_{\lambda_0}+1)^{n_0}\frac{\epsilon}{2(n_0+1)(R_{\lambda_0}+1)^{n_0}}\leq\epsilon.
\end{align}

That implies the first assertion.\\
For the second assertion there is for every $j\in\{0,...,n_0\}$ a finite dimensional linear space $M_j^{n_0}\subseteq X$, with (see Gohberg, Goldberg and Krupnik \cite{gohberg}, chapter I Lemma 1.1)
\begin{align*}
K_j^{(n_0)}X=K_j^{(n_0)}M_j^{n_0}\subseteq M_j^{n_0}.
\end{align*}
Defining the finite dimensional subspace $M_{n_0}:=\sum_{j=0}^{n_0}M_j^{n_0}$ we have
\begin{align*}
K_{n_0}(\lambda)X\subseteq K_{n_0}(\lambda)M_{n_0}\subseteq M_{n_0},\text{ for all } \lambda \in B_{R_{\lambda_0}}\text{ (even for all }\lambda\in\Complex).
\end{align*}
\end{proof}
\begin{lemma}\label{lemma3.2}
Let $\lambda \mapsto F(\lambda)\in \mathcal{F}(X)$ be analytic on a domain $\Omega \subseteq \mathbb{C}$. If there is a finite dimensional subspace $M_{\lambda_0}$ for every  $B_{R_{\lambda_0}}\subseteq \Omega$ with ran$(F(\lambda))=F(\lambda)M_{\lambda_0}\subseteq M_{\lambda_0}$ for all $\lambda \in B_{R_{\lambda_0}}$ then

\begin{align*}
d:\Omega\rightarrow \mathbb{C}, \quad d(\lambda):=\det (1-F(\lambda))
\end{align*}

defines a holomorphic function in $\Omega$.

\end{lemma}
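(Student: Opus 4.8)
The plan is to use that holomorphy is a local property: it suffices to show that $d$ is analytic on each ball $B_{R_{\lambda_0}}$. Fix $\lambda_0\in\Omega$ and write $M:=M_{\lambda_0}$, $N:=\dim M$. By hypothesis $\operatorname{ran}(F(\lambda))\subseteq M$ and $F(\lambda)M\subseteq M$ for every $\lambda\in B_{R_{\lambda_0}}$, so the restriction $F(\lambda)|_M$ is a well-defined linear map of the finite-dimensional space $M$ into itself. The crucial observation is that the nuclear determinant only sees the nonzero eigenvalues of $F(\lambda)$, and these are already captured by $F(\lambda)|_M$: if $F(\lambda)v=\mu v$ with $\mu\neq 0$, then $v=\mu^{-1}F(\lambda)v\in\operatorname{ran}(F(\lambda))\subseteq M$. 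Hence the nonzero eigenvalues of $F(\lambda)$, counted with algebraic multiplicity, coincide with those of the $N\times N$ matrix representing $F(\lambda)|_M$.

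To make this precise I would fix a basis $v_1,\dots,v_N$ of $M$ and, using the Hahn--Banach theorem, choose functionals $\phi_1,\dots,\phi_N\in X^*$ whose restrictions to $M$ form the dual basis. Setting
\[
A(\lambda):=\big(\langle\phi_i,F(\lambda)v_j\rangle\big)_{i,j=1}^N,
\]
the matrix $A(\lambda)$ represents $F(\lambda)|_M$ in this basis. Since $\lambda\mapsto F(\lambda)$ is analytic with respect to the nuclear norm, and hence with respect to the operator norm, each map $\lambda\mapsto F(\lambda)v_j$ is an analytic $X$-valued function; applying the bounded functional $\phi_i$ shows that every entry $\langle\phi_i,F(\lambda)v_j\rangle$ is an ordinary analytic scalar function on $B_{R_{\lambda_0}}$. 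Thus $A(\lambda)$ depends analytically on $\lambda$.

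Denoting by $\mu_1(\lambda),\dots,\mu_N(\lambda)$ the eigenvalues of $A(\lambda)$ (with multiplicity) and using $\det(I_N-A)=\prod_k(1-\mu_k)$ together with $\operatorname{tr}A=\sum_k\mu_k$, the zero eigenvalues contribute a factor $1$ to the nuclear product and $0$ to the trace, so by the reduction above
\[
d(\lambda)=\det\big(\mathds{1}-F(\lambda)\big)=\prod_{k=1}^N\big(1-\mu_k(\lambda)\big)\exp\big(\mu_k(\lambda)\big)=\det\big(I_N-A(\lambda)\big)\exp\big(\operatorname{tr}A(\lambda)\big).
\]
The right-hand side is a polynomial in the finitely many entries of $A(\lambda)$ composed with the entire function $\exp$, hence analytic in $\lambda$ on $B_{R_{\lambda_0}}$. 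As $\lambda_0$ was arbitrary, $d$ is holomorphic on $\Omega$.

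The step I expect to require the most care is this last reduction of the infinite-dimensional regularized determinant to the finite-dimensional expression $\det(I_N-A)\exp(\operatorname{tr}A)$ --- specifically, justifying that the nonzero eigenvalues of $F(\lambda)$ agree, with their algebraic multiplicities, with those of $A(\lambda)$, and that the (infinitely many) zero eigenvalues may be discarded without affecting the product. Once this identity is established, analyticity follows at once from the polynomial and exponential dependence on the matrix entries.
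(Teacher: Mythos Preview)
Your argument is correct and follows the same overall strategy as the paper: localize to a ball $B_{R_{\lambda_0}}$, restrict $F(\lambda)$ to the invariant finite-dimensional subspace $M=M_{\lambda_0}$, observe that the nonzero eigenvalues (with multiplicities) of $F(\lambda)$ coincide with those of $F(\lambda)|_M$, and then invoke finite-dimensional theory for the analyticity.

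The difference lies only in the final step. The paper equips $M$ with a Hilbert space structure, notes that the nuclear determinant of $F(\lambda)$ equals the Hilbert--Schmidt regularized determinant $\det_{HS}\big(\mathds{1}-F(\lambda)|_M\big)$, and then cites Simon for the analyticity of $\lambda\mapsto\det_{HS}$. Your route is more elementary and self-contained: by choosing a basis of $M$ and Hahn--Banach dual functionals you obtain an explicit $N\times N$ matrix $A(\lambda)$ with analytic entries, and the identity
\[
\det\big(\mathds{1}-F(\lambda)\big)=\det\big(I_N-A(\lambda)\big)\exp\big(\operatorname{tr}A(\lambda)\big)
\]
makes the analyticity immediate without any appeal to Hilbert space determinant theory. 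The step you flag as delicate --- that the algebraic multiplicities of the nonzero eigenvalues of $F(\lambda)$ match those of $A(\lambda)$ --- is exactly the same reduction the paper uses (stated there as $\sigma(F(\lambda))\setminus\{0\}=\sigma(F(\lambda)|_M)\setminus\{0\}$); it follows by the standard argument that every generalized eigenvector for a nonzero eigenvalue of a finite-rank operator lies in its range. Your version has the advantage of avoiding the external reference; the paper's has the advantage of brevity.
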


\begin{proof}
By assumption for every disc $B_{R_{\lambda_0}}$ there is a finite-dimensional subspaces $M_{\lambda_0}$ with

\begin{align*}
F(\lambda)M_{\lambda_0}=\text{ran}(F(\lambda))\subseteq M_{\lambda_0},\text{ for all } \lambda \in B_{R_{\lambda_0}}.
\end{align*}

Then for every $\lambda\in B_{R_{\lambda_0}}$ we get for $F(\lambda)|_{M_{\lambda_0}}:M_{\lambda_0}\rightarrow M_{\lambda_0}$

\begin{align}
F(\lambda)|_{M_{\lambda_0}} = \sum_{k=0}^\infty (\lambda^k-\lambda_0) F_{k,\lambda_0}|_{M_{\lambda_0}} \label{3.2.1}
\end{align}
and
\begin{align*}
\sigma(F(\lambda))\setminus\{0\}=\sigma(F(\lambda)|_{M_{\lambda_0}})\setminus\{0\} \text{ (finite sets)}.
\end{align*}

Because $M_{\lambda_0}$ is a finite-dimensional linear space, it is possible to consider $M_{\lambda_0}$ as a Hilbert-space, and $F(\lambda)|_{M_{\lambda_0}}$ as a Hilbert-Schmidt operator ($F(\lambda)\in \mathcal{S}_2({M_{\lambda_0}})$ for every $\lambda \in B_{R_{\lambda_0}}$). With (\ref{3.2.1}) we have the information that $F(\lambda)|_{M_{\lambda_0}}$ is analytic.\\
For a Hilbert-Schmidt operator $A$ in a Hilbert space the Hilbert-Schmidt determinant is 
\begin{align*}
\text{det}_{HS}(\mathds{1}-A):=\prod_{j=1}^\infty \big(1-\lambda_j(A)\big)\exp\big(\lambda_j(A)\big).
\end{align*}
That means in this situation the nuclear determinant in Definition \ref{def2.3} coincides with $\det_{HS}$. For $F(\lambda)|_{M_{\lambda_0}}$, $\lambda\in B_{R_{\lambda_0}}$, we have $\det_{HS}\big(\mathds{1}-F(\lambda)|_{M_{\lambda_0}}\big)=\det \big(\mathds{1}-F(\lambda)\big).$ \\
The function $\lambda\mapsto\det_{HS}(\big(\mathds{1}-F(\lambda)|_{M_{\lambda_0}}\big)$ is analytic in $B_{R_{\lambda_0}}$ which is proved e.g. in Simon \cite{simon} p.254 and p.261.\\
Hence $d(\cdot)=\det\big(\mathds{1}-F(\cdot)\big)$ is holomorphic in $B_{R_{\lambda_0}}$ and so also in $\Omega$.
\end{proof}

\begin{remark}
With the previous argumentation we know, that $\lambda\mapsto \det\big(\mathds{1}-(A+\lambda B)\big)$ is an entire-function for all $A,B\in\mathcal{F}(X)$. If there is in addition a continuous monoton non-decreasing function $g$ on $[0,\infty)$ such that
\begin{align}
|\text{det}(1-F)|\leq g(\|F\|_\mathcal{N}) \text{ for all }F\in\mathcal{F}(X) \label{remark3.4}
\end{align}
then (see Gohberg, Goldberg and Krupnik \cite{gohberg}, Chapter II Theorem 4.1) for any $A,B\in\mathcal{F}(X)$ 
\begin{align}
|\det(\mathds{1}-A)-\det(\mathds{1}-B)| \leq \vectornorm{A-B}_\mathcal{N}g(\|A\|_\mathcal{N}+\|B\|_\mathcal{N}+1). \label{1.4}
\end{align}
By Lemma \ref{2.5} (i) $g(t):=\exp\big(\frac{1}{2}t^2\big)$ satisfies (\ref{remark3.4}) and hence we can derive from (\ref{1.4}) the estimate
\begin{align*}
|\det(\mathds{1}-A)-\det(\mathds{1}-B)|\leq \vectornorm{A-B}_\mathcal{N}\exp\bigg(\frac{1}{2}(\vectornorm{A}_\mathcal{N}+\vectornorm{B}_\mathcal{N}+1)^2\bigg)
\end{align*}
for all $A,B\in\mathcal{F}(X)$.
\end{remark}
Now we want to extend this inequality from $\mathcal{F}(X)$ to $\mathcal{N}(X)$.

\begin{lemma}\label{lemma3.4}
For all $A,B\in\mathcal{N}(X)$ holds

\begin{align*}
|\det(\mathds{1}-A)-\det(\mathds{1}-B)|\leq \vectornorm{A-B}_\mathcal{N}\exp\bigg(\frac{1}{2}(\vectornorm{A}_\mathcal{N}+\vectornorm{B}_\mathcal{N}+1)^2\bigg).
\end{align*}

\end{lemma}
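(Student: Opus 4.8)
The plan is to reduce everything to the inequality (\ref{1.4}), which is already available on the finite rank operators $\mathcal{F}(X)$, by a density argument. First I would fix $A,B\in\NN(X)$ and use that $\mathcal{F}(X)$ is dense in $\big(\NN(X),\|\cdot\|_{\NN}\big)$ to choose sequences $\{A_n\},\{B_n\}\subseteq\mathcal{F}(X)$ with $\|A_n-A\|_{\NN}\to 0$ and $\|B_n-B\|_{\NN}\to 0$. For each $n$ the inequality (\ref{1.4}) applies to $A_n,B_n\in\mathcal{F}(X)$ and gives
\begin{align*}
|\det(\mathds{1}-A_n)-\det(\mathds{1}-B_n)|\leq \|A_n-B_n\|_{\NN}\exp\Big(\frac{1}{2}(\|A_n\|_{\NN}+\|B_n\|_{\NN}+1)^2\Big).
\end{align*}
Since $\|A_n\|_{\NN}\to\|A\|_{\NN}$, $\|B_n\|_{\NN}\to\|B\|_{\NN}$ and $\|A_n-B_n\|_{\NN}\to\|A-B\|_{\NN}$, the right-hand side converges to $\|A-B\|_{\NN}\exp\big(\frac{1}{2}(\|A\|_{\NN}+\|B\|_{\NN}+1)^2\big)$, the right-hand side of the asserted inequality. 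Thus everything reduces to showing that $F\mapsto\det(\mathds{1}-F)$ is continuous on $\NN(X)$ with respect to $\|\cdot\|_{\NN}$, so that the left-hand side converges to $|\det(\mathds{1}-A)-\det(\mathds{1}-B)|$.

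For this continuity I would first observe that applying (\ref{1.4}) to the pair $A_n,A_m$ yields
\begin{align*}
|\det(\mathds{1}-A_n)-\det(\mathds{1}-A_m)|\leq \|A_n-A_m\|_{\NN}\exp\Big(\frac{1}{2}(\|A_n\|_{\NN}+\|A_m\|_{\NN}+1)^2\Big).
\end{align*}
As $\{A_n\}$ is Cauchy in $\NN(X)$ and the norms $\|A_n\|_{\NN}$ are bounded, the exponential factor stays bounded and $\{\det(\mathds{1}-A_n)\}$ is a Cauchy sequence in $\Complex$, hence converges to some $L_A$; the same estimate applied to an interlaced sequence shows that $L_A$ is independent of the chosen approximation. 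The hard part will be to identify this limit with the eigenvalue product $\det(\mathds{1}-A)$ of Definition \ref{def2.3}.

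To carry this out I would use that $\|A_n-A\|_{\NN}\to 0$ forces $\|A_n-A\|\to 0$, so that by the stability of the nonzero spectrum of compact operators the eigenvalues $\lambda_j(A_n)$ converge to the $\lambda_j(A)$ with their multiplicities. By (\ref{absch.nukl}) one has $\sum_j|\lambda_j(A_n)|^2\leq\|A_n\|_{\NN}^2$ with the right-hand side uniformly bounded in $n$, which makes the regularized products $\prod_j(1-\lambda_j(A_n))\exp(\lambda_j(A_n))$ converge absolutely and uniformly in $n$; hence the product depends continuously on the eigenvalue sequence and $L_A=\det(\mathds{1}-A)$. The identical argument gives $\det(\mathds{1}-B_n)\to\det(\mathds{1}-B)$, and letting $n\to\infty$ in the first displayed inequality establishes the claim for all $A,B\in\NN(X)$.
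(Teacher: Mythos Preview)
Your overall strategy---approximate by finite-rank operators, apply (\ref{1.4}), and pass to the limit---is exactly the paper's. The delicate point, as you correctly isolate, is proving $\det(\mathds{1}-A_n)\to\det(\mathds{1}-A)$, where the right-hand side is the eigenvalue product of Definition~\ref{def2.3}. Your Cauchy argument does show that the sequence $\det(\mathds{1}-A_n)$ has \emph{some} limit $L_A$; the gap is in the identification $L_A=\det(\mathds{1}-A)$.

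Your proposed identification runs through spectral perturbation: $\|A_n-A\|\to 0$ forces the nonzero eigenvalues of $A_n$ to converge to those of $A$, and the uniform bound $\sum_j|\lambda_j(A_n)|^2\le\|A_n\|_{\NN}^2\le C$ should then force the regularized products to converge. But termwise convergence of the eigenvalues together with a uniform $\ell^2$-bound is \emph{not} enough to conclude convergence of the infinite product $\prod_j(1-\lambda_j(A_n))e^{\lambda_j(A_n)}$. What you would need is equi-smallness of the tails, i.e.\ $\sup_n\sum_{|\lambda_j(A_n)|\le\varepsilon}|\lambda_j(A_n)|^2\to 0$ as $\varepsilon\to 0$, and this does not follow from the uniform bound alone (eigenvalue mass can escape toward zero as $n\to\infty$). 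So the sentence ``hence the product depends continuously on the eigenvalue sequence'' is not justified as it stands.

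The paper circumvents this by choosing a \emph{specific} approximating sequence tied to a fixed nuclear representation $A=\sum_k\langle\phi_k,\cdot\rangle f_k$, namely $A_n=\sum_{k\le n}\langle\phi_k,\cdot\rangle f_k$, and simultaneously building Hilbert--Schmidt matrices $\tilde A,\tilde A_n$ on $l^2(\Natural)$ (the Gohberg--Goldberg--Krupnik related operators) with $\sigma(A)\setminus\{0\}=\sigma(\tilde A)\setminus\{0\}$, $\sigma(A_n)\setminus\{0\}=\sigma(\tilde A_n)\setminus\{0\}$, and $\|\tilde A_n-\tilde A\|_{HS}\to 0$. Since the nuclear determinant of Definition~\ref{def2.3} coincides with $\det_{HS}$ on operators with the same nonzero spectrum, and $\det_{HS}$ is known to be continuous in the Hilbert--Schmidt norm, one obtains
\[
\det(\mathds{1}-A_n)=\text{det}_{HS}(\mathds{1}-\tilde A_n)\to\text{det}_{HS}(\mathds{1}-\tilde A)=\det(\mathds{1}-A)
\]
directly. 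This Hilbert--Schmidt detour is precisely the missing ingredient in your argument, and it is the substantive content of the paper's proof of this lemma.
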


\begin{proof}

Let $A,B\in\mathcal{N}(X)$. Then there are sequences $(f_n),(g_n)\subseteq X, (\phi_n),(\psi_n)\subseteq X^*$, with
\begin{align*}
\begin{matrix}
A\cdot=\sum_{k=1}^\infty \langle\phi_k,\cdot\rangle f_k,  &B\cdot=\sum_{k=1}^\infty \langle\psi_k,\cdot\rangle g_k\\
\sum_{k=1}^\infty \vectornorm{\phi_k}_{X^*}\vectornorm{f_k}_X < \infty,  &\sum_{k=1}^\infty \vectornorm{\psi_k}_{X^*}\vectornorm{g_k}_X < \infty
\end{matrix}
\end{align*}

The infinite matrices $\tilde{A}:=(a_{ij})_{i,j\in\N}$, $\tilde{B}:=(b_{ij})_{i,j\in\N}$, with the entries
\begin{align*}
\begin{matrix}
a_{ij}:=\frac{\vectornorm{f_i}_X^\frac{1}{2}}{\vectornorm{\phi_i}_{X^*}^\frac{1}{2}}\langle \phi_i,f_j\rangle \frac{\vectornorm{\phi_j}_{X^*}^\frac{1}{2}}{\vectornorm{f_j}_{X}^\frac{1}{2}}\\
b_{ij}:=\frac{\vectornorm{g_i}_X^\frac{1}{2}}{\vectornorm{\psi_i}_{X^*}^\frac{1}{2}}\langle \psi_i,g_j\rangle \frac{\vectornorm{\psi_j}_{X^*}^\frac{1}{2}}{\vectornorm{g_j}_{X}^\frac{1}{2}}
\end{matrix}
\end{align*}

are linear operators in $\mathcal{S}_2(l^2(\N))$ and for their spectra holds (Gohberg, Goldberg and Krupnik \cite{gohberg} p.106-107)

\begin{align}
\sigma(A)=\sigma(\tilde{A}), \quad \sigma(B)=\sigma(\tilde{B}). \label{3.4.1}
\end{align}
By definition we obtain for the nuclear determinant and the Hilbert-Schmidt determinant

\begin{align*}
\det(\mathds{1}-A)=\text{det}_{HS}(\mathds{1}-\tilde{A}),\, \det(\mathds{1}-B)=\text{det}_{HS}(\mathds{1}-\tilde{B}).
\end{align*}

Now we define the finite rank operators 

\begin{align*}
\begin{matrix}
A_n=\sum_{k=1}^n \phi_k\otimes f_k,  &B_n=\sum_{k=1}^n \psi_k\otimes g_k
\end{matrix}
\end{align*}

and the finite rank matrices

\begin{align*}
\begin{matrix}
\tilde{A}_n:=(\tilde{a}_{ij})_{i,j\in\N}, &\tilde{B}_n:=(\tilde{b}_{ij})_{i,j\in\N}
\end{matrix}
\end{align*}
where $\tilde{a}_{ij}=a_{ij}, \tilde{b}_{ij}=b_{ij}$ for $1\leq i,j\leq n$, else $\tilde{a}_{ij}=\tilde{b}_{ij}=0$.
Then we have
\begin{align*}
&\vectornorm{A_n-A}_\mathcal{N}\rightarrow 0,&\, \vectornorm{B_n-B}_\mathcal{N}\rightarrow 0,\\
&\vectornorm{\tilde{A}_n-\tilde{A}}_{HS}\rightarrow 0, &\,\vectornorm{\tilde{B}_n-\tilde{B}}_{HS}\rightarrow 0 
\end{align*}
as $n\rightarrow \infty$. Again the determinants coincide, i.e. 
\begin{align*}
\text{det}_{HS}(\mathds{1}-\tilde{A_n})=\det(\mathds{1}-A_n).
\end{align*}
The Hilbert-Schmidt determinant is continuous with respect to the Hibert-Schmidt norm.\\
That implies
\begin{align*}
&\lim_{n\rightarrow\infty}\det(\mathds{1}-A_n)=\lim_{n\rightarrow\infty}\text{det}_{HS}(\mathds{1}-\tilde{A_n})\\
&=\text{det}_{HS}(\mathds{1}-\tilde{A})=\det(\mathds{1}-A).
\end{align*}
With the same arguments we obtain
\begin{align*}
\lim_{n\rightarrow\infty}\det(\mathds{1}-B_n)=\det(\mathds{1}-B).
\end{align*}
Because $A_n,B_n\in\mathcal{F}(X)$ we get for every $n\in\Natural$
\begin{align*}
|\det(\mathds{1}-A_n)-\det(\mathds{1}-B_n)|\leq \vectornorm{A_n-B_n}_\mathcal{N}\exp\bigg(\frac{1}{2}(\vectornorm{A_n}_\mathcal{N}+\vectornorm{B_n}_\mathcal{N}+1)^2\bigg)
\end{align*}
For $n\rightarrow\infty$ follows
\begin{align*}
|\det(\mathds{1}-A)-\det(\mathds{1}-B)|\leq \vectornorm{A-B}_\mathcal{N}\exp\bigg(\frac{1}{2}(\vectornorm{A}_\mathcal{N}+\vectornorm{B}_\mathcal{N}+1)^2\bigg).
\end{align*}
\end{proof}

Now we are ready to prove the main assertion.

\begin{theorem}\label{theorem3.5}
Let $\lambda \mapsto K(\lambda)$ be an analytic map in $\Omega \subseteq \mathbb{C}$ with values in $\mathcal{N}(X)$. Then $d$, defined by

\begin{align*}
d(\lambda):=\det(\mathds{1}-K(\lambda)),
\end{align*}

is a holomorphic function in $\Omega$
\end{theorem}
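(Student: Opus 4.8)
The plan is to reduce the global assertion to a local one and then glue together the three preceding lemmas by a Weierstrass (normal-families) argument. Since holomorphy is a local property, it suffices to show that $d$ is holomorphic on each ball $B_{R_{\lambda_0}}$ for arbitrary $\lambda_0\in\Omega$; as $\lambda_0$ ranges over $\Omega$ these balls cover $\Omega$. So I fix $\lambda_0\in\Omega$ and invoke Lemma \ref{lemma3.1} to obtain a sequence of analytic maps $\lambda\mapsto K_{n,\lambda_0}(\lambda)\in\mathcal{F}(X)$ that converges to $K(\cdot)$ locally uniformly in the nuclear norm on $B_{R_{\lambda_0}}$, each leaving a fixed finite-dimensional subspace $M_{n,\lambda_0}$ invariant. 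For every fixed $n$, Lemma \ref{lemma3.2} then applies and guarantees that
\begin{align*}
d_n(\lambda):=\det\big(\mathds{1}-K_{n,\lambda_0}(\lambda)\big)
\end{align*}
is holomorphic on $B_{R_{\lambda_0}}$.

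Next I would compare $d_n$ with $d$ by means of the Lipschitz-type estimate of Lemma \ref{lemma3.4}, applied with $A=K_{n,\lambda_0}(\lambda)$ and $B=K(\lambda)$, giving
\begin{align*}
|d_n(\lambda)-d(\lambda)|\leq \vectornorm{K_{n,\lambda_0}(\lambda)-K(\lambda)}_{\NN}\exp\bigg(\tfrac{1}{2}\big(\vectornorm{K_{n,\lambda_0}(\lambda)}_{\NN}+\vectornorm{K(\lambda)}_{\NN}+1\big)^2\bigg).
\end{align*}
On any compact subset $C\subseteq B_{R_{\lambda_0}}$ the nuclear norms $\vectornorm{K(\lambda)}_{\NN}$ and $\vectornorm{K_{n,\lambda_0}(\lambda)}_{\NN}$ are uniformly bounded (the latter thanks to the local-uniform convergence from Lemma \ref{lemma3.1}), so the exponential factor stays bounded on $C$ uniformly in $n$. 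Combined with $\sup_{\lambda\in C}\vectornorm{K_{n,\lambda_0}(\lambda)-K(\lambda)}_{\NN}\to 0$, this forces $d_n\to d$ uniformly on $C$, i.e. locally uniformly on $B_{R_{\lambda_0}}$.

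Finally, $d$ is a locally uniform limit of the holomorphic functions $d_n$, so the Weierstrass convergence theorem yields that $d$ is holomorphic on $B_{R_{\lambda_0}}$; since $\lambda_0\in\Omega$ was arbitrary, $d$ is holomorphic on all of $\Omega$. The one genuinely delicate point I expect is verifying that the convergence $d_n\to d$ is \emph{locally uniform} and not merely pointwise: this hinges on dominating the exponential factor in Lemma \ref{lemma3.4} uniformly in $n$ over compacta, which requires that the approximating nuclear norms $\vectornorm{K_{n,\lambda_0}(\lambda)}_{\NN}$ stay bounded on compact subsets. That boundedness must be read off carefully from the local-uniform nuclear-norm convergence supplied by Lemma \ref{lemma3.1}, but once it is secured the remaining steps are routine.
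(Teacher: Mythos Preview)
Your proposal is correct and follows essentially the same approach as the paper: approximate by finite-rank analytic families via Lemma~\ref{lemma3.1}, get holomorphy of the approximants from Lemma~\ref{lemma3.2}, control the difference $|d_n-d|$ by the Lipschitz estimate of Lemma~\ref{lemma3.4}, and conclude via Weierstrass. Your explicit remark on why the exponential factor stays bounded uniformly in $n$ on compacta is exactly the point the paper handles by asserting the existence of a constant $c_{\lambda_0}$, so there is no gap.
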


\begin{proof}

According to Lemma \ref{lemma3.1} for every $\lambda_0\in\Omega$ there is a sequence $\big(\lambda \mapsto K_{n,\lambda_0}(\lambda)\big)_{n\in\N}\subseteq \mathcal{F}(X)$ with the property 

\begin{align*}
\vectornorm{K(\lambda)-K_{n,\lambda_0}(\lambda)}_\mathcal{N}\rightarrow 0 \text{ locally uniformly on }B_{R_{\lambda_0}},
\end{align*}

and by Lemma \ref{lemma3.2}

\begin{align*}
d_{n,\lambda_0}:\lambda \mapsto \det(\mathds{1}-K_{n,\lambda_0}(\lambda))
\end{align*}

is holomorphic.
Now with Lemma \ref{lemma3.4} we get

\begin{align*}
|d_{n,{\lambda_0}}(\lambda)-d(\lambda)|\leq \vectornorm{K_{n,{\lambda_0}}(\lambda)-K(\lambda)}_\mathcal{N}\exp\bigg(\frac{1}{2}\bigg(\vectornorm{K_{n,{\lambda_0}}(\lambda)}_\mathcal{N}+\vectornorm{K(\lambda)}_\mathcal{N}+1\bigg)^2\bigg).
\end{align*}

There is a constant $c_{{\lambda_0}}>0$ such that
\begin{align*}
\exp\bigg(\frac{1}{2}\bigg(\vectornorm{K_{n,{\lambda_0}}(\lambda)}_\mathcal{N}+\vectornorm{K(\lambda)}_\mathcal{N}+1\bigg)^2\bigg)\leq c_{{\lambda_0}}
\end{align*}
locally uniformly on $B_{R_{\lambda_0}}$. The sequence of holomorphic functions $(d_{n,{\lambda_0}})$ converges locally uniformly to $d$ on $B_{R_{\lambda_0}}$. Hence for every $\lambda_0$ the function $d$ is on $B_{R_{\lambda_0}}$ the locally uniform limit of the holomorphic functions $d_{n,{\lambda_0}}$, and so by the Weierstrass convergence-theorem (see e.g. Remmert and Schumacher \cite{remmert}, p. 222) $d$ has to be holomorphic on $B_{R_{\lambda_0}}$ and hence also on $\Omega$.
\end{proof}

We can apply the general result in Theorem \ref{theorem3.5} to our initial problem of Section 2. 

\begin{theorem}\label{corollary3.7}
Let $Z_0\in\mathcal{B}(X)$, $X$ Banach space, and $Z=Z_0+K$, $K\in\NN(X)$. Then the determinant
\begin{align*}
d(\cdot)=\det\big(\mathds{1}-KR_{Z_0}(\cdot)\big)
\end{align*}
is holomorphic on $\Complex\setminus[a,b]$.\\
Therefore $h=d\circ\phi$, with $\phi$ from (\ref{8.1}) is a holomorphic function in $\mathbb{D}$.\\
Moreover there is the following connection between the algebraic multiplicity $m_\lambda(Z)$ of any eigenvalue $\lambda$ of $Z$ and the order $o_\lambda(d)$ of any zero of $d$.
\begin{align*}
\lambda \in\sigma_d(Z) \text{ with } m_\lambda(Z)=m \Leftrightarrow \lambda \in \mathbb{Z}(h) \text{ with } o_\lambda(d)=m.
\end{align*} 
\end{theorem}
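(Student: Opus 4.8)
The statement bundles three assertions: holomorphy of $d$ on $\Complex\setminus[a,b]$, holomorphy of $h=d\circ\phi$ on $\mathbb{D}$, and the identity $o_\lambda(d)=m_\lambda(Z)$. The plan is to treat them in this order, the first two being essentially formal consequences of what precedes, the last being the real content.

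For the holomorphy of $d$, the plan is to reduce everything to Theorem \ref{theorem3.5} by verifying that $\lambda\mapsto KR_{Z_0}(\lambda)$ is an analytic $\NN(X)$-valued map on $\rho(Z_0)=\Complex\setminus[a,b]$. Fixing $\lambda_0\in\rho(Z_0)$, I would expand the resolvent in its Neumann series $R_{Z_0}(\lambda)=\sum_{n\geq 0}(-1)^n(\lambda-\lambda_0)^n R_{Z_0}(\lambda_0)^{n+1}$, valid for $|\lambda-\lambda_0|<\norm{R_{Z_0}(\lambda_0)}^{-1}$, and multiply on the left by $K$. Since $\NN(X)$ is a Banach ideal, each coefficient $K_{n,\lambda_0}:=(-1)^n K R_{Z_0}(\lambda_0)^{n+1}$ lies in $\NN(X)$ with $\norm{K_{n,\lambda_0}}_{\NN}\leq\norm{K}_{\NN}\norm{R_{Z_0}(\lambda_0)}^{n+1}$, so $\sum_n|\lambda-\lambda_0|^n\norm{K_{n,\lambda_0}}_{\NN}<\infty$ on the same disc. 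This is exactly the analyticity hypothesis of Theorem \ref{theorem3.5}, which then gives holomorphy of $d$ on $\Complex\setminus[a,b]$.

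For $h=d\circ\phi$, since $\phi$ from (\ref{8.1}) is biholomorphic from $\mathbb{D}\setminus\{0\}$ onto $\Complex\setminus[a,b]$, the composition is automatically holomorphic on $\mathbb{D}\setminus\{0\}$, so only the origin needs attention. As $w\to 0$ one has $w^{-1}\to\infty$, hence $\phi(w)\to\infty$; by the Remark extending $d$ to infinity, $KR_{Z_0}(\lambda)\to 0$ and thus $d(\lambda)\to\det(\mathds{1}-0)=1$, so $h(w)\to 1$. In particular $h$ is bounded near $0$, and Riemann's removable-singularity theorem extends it holomorphically across the origin with the prescribed value $h(0)=1$; hence $h$ is holomorphic on all of $\mathbb{D}$.

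The hard part will be the equality $o_{\lambda_0}(d)=m_{\lambda_0}(Z)$. Fix $\lambda_0\in\sigma_{disc}(Z)$; since $\sigma_{ess}(Z)=[a,b]=\sigma(Z_0)$ we have $\lambda_0\in\rho(Z_0)$, so $R_{Z_0}$ is holomorphic and boundedly invertible near $\lambda_0$. The guiding heuristic is the logarithmic-derivative identity $\frac{d'}{d}=\operatorname{tr}[R_Z-R_{Z_0}]+\frac{d}{d\lambda}\operatorname{tr}(KR_{Z_0})$, in which the regularization term and $\operatorname{tr} R_{Z_0}$ are holomorphic near $\lambda_0$, so that $o_{\lambda_0}(d)=\operatorname{tr} P_{\lambda_0}=m_{\lambda_0}(Z)$, the rank of the Riesz projection of $Z$ at $\lambda_0$. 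The obstacle is that this identity, and the ordinary determinant underlying it, presuppose absolute summability of eigenvalues, which fails in a general Banach space where only the $\ell^2$-bound (\ref{absch.nukl}) holds. To make the argument rigorous I would instead localize: as $A(\lambda_0)=KR_{Z_0}(\lambda_0)$ is compact, $1$ is an isolated eigenvalue with finite-rank Riesz projection $Q$; for $\lambda$ near $\lambda_0$, $\mathds{1}-A(\lambda)$ is boundedly invertible on $(\mathds{1}-Q)X$, and the regularized determinant factors as a nonvanishing holomorphic function times the finite-dimensional determinant of $(\mathds{1}-A(\lambda))|_{QX}$. On that finite-dimensional space the regularized and ordinary determinants differ only by a nonvanishing exponential, so $o_{\lambda_0}(d)$ equals the multiplicity of $\lambda_0$ as a characteristic value of the operator function $\mathds{1}-KR_{Z_0}(\lambda)$. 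Finally, the factorization $\mathds{1}-KR_{Z_0}(\lambda)=(\lambda\mathds{1}-Z)R_{Z_0}(\lambda)$ with $R_{Z_0}$ holomorphic and invertible near $\lambda_0$, together with the Gohberg--Sigal invariance of this multiplicity under multiplication by an invertible holomorphic family, identifies it with the multiplicity of $\lambda_0$ as a characteristic value of the pencil $\lambda\mathds{1}-Z$, namely $m_{\lambda_0}(Z)$. Carrying out this localization cleanly in the nuclear Banach-space setting is the crux; the equivalence $\lambda_0\in\sigma_{disc}(Z)\Leftrightarrow\lambda_0\in\mathcal{Z}(d)$ is already known, so only the matching of orders remains.
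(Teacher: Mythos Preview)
Your treatment of the first two assertions is correct and, if anything, more explicit than the paper's (which simply invokes analyticity of $R_{Z_0}$ to apply Theorem~\ref{theorem3.5}, and handles the point $w=0$ as you do via the Remark on $d(\infty)=1$).

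For the multiplicity identity, however, your route and the paper's diverge. The paper does not attempt a local spectral factorization or invoke Gohberg--Sigal theory. Instead it argues by finite-rank approximation: choose $K_n\in\mathcal{F}(X)$ with $\|K-K_n\|_{\NN}\to 0$, set $Z_n=Z_0+K_n$ and $d_n(\lambda)=\det(\mathds{1}-K_nR_{Z_0}(\lambda))$. For each $n$ the operator $K_nR_{Z_0}(\lambda)$ acts on a fixed finite-dimensional invariant subspace, so the nuclear determinant coincides with a Hilbert--Schmidt determinant there, and the equality $o_\mu(d_n)=m_\mu(Z_n)$ is imported from the Hilbert-space theory (Hansmann's thesis). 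Stability of total algebraic multiplicity under small perturbations (\cite{gohberg1}) gives $\sum_{\mu\in B_\epsilon(\lambda_0)} m_\mu(Z_n)=m_{\lambda_0}(Z)$ for large $n$; locally uniform convergence $d_n\to d$ together with Rouch\'e's theorem gives $\sum_{\mu\in B_\epsilon(\lambda_0)} o_\mu(d_n)=o_{\lambda_0}(d)$; equating the two sums yields the claim.

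Your approach is in principle sound---the factorization $\mathds{1}-KR_{Z_0}(\lambda)=(\lambda\mathds{1}-Z)R_{Z_0}(\lambda)$ and Gohberg--Sigal invariance are exactly the right tools for identifying characteristic-value multiplicities---but the step you yourself flag as ``the crux'' is genuinely delicate here. The regularized determinant is not multiplicative (the paper stresses this explicitly), and the Riesz projection $Q$ you fix at $\lambda_0$ does not commute with $A(\lambda)$ for $\lambda\neq\lambda_0$, so ``$\mathds{1}-A(\lambda)$ boundedly invertible on $(\mathds{1}-Q)X$'' must be read as a Schur-complement statement and the determinant split accordingly. Making this rigorous requires either the holomorphic family $Q(\lambda)$ of spectral projections conjugated back to a fixed space, or an independent argument that the infinite-eigenvalue factor is holomorphic and nonvanishing; neither follows from what the paper has established. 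The paper's approximation-plus-Rouch\'e argument sidesteps all of this by reducing to the finite-rank case where the result is already known, at the price of importing an external Hilbert-space result and a perturbation-stability theorem. Your route is more intrinsic and would avoid those imports, but it needs real work at exactly the point you identify.
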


\begin{proof}
Since $R_{Z_0}(\cdot)$ is analytic on $\rho(Z_0)$, $d$ is holomorphic on $\rho(Z_0)$.\\
In Section 2 we have already mentioned, that the zeros of $d$ coincide with the discrete spectrum of $Z$. So only we have to show that the algebraic multiplicity of any discrete eigenvalue of $Z$ is equal to its order as a zero of $d$.\\
For this lets fix an eigenvalue $\lambda_0 \in \sigma_{disc}(Z)$ and an $\epsilon>0$, such that \\ $B_{\epsilon}(\lambda_0)\cap \sigma_{disc}(Z)=\{\lambda_0\}$.\\
Next we choose a sequence $(K_n)\subseteq \mathcal{F}(X)$ with
\begin{align}
\|K-K_n\|\rightarrow 0 \text{ as }n\rightarrow \infty.\label{kompaktekonvergenz}
\end{align}
For $Z_n:=Z_0+K_n$ (\ref{kompaktekonvergenz}) implies
\begin{align}
\|Z-Z_n\|\rightarrow 0 \text{ as } n\rightarrow \infty. \label{zkonvergenz}
\end{align}
The statement in Gohberg, Goldberg and Kaashoek \cite{gohberg1} Chapter II Theorem 4.2. and (\ref{zkonvergenz}) implies that there is $N_\epsilon\in\mathbb{N}$ with
\begin{align}
\sum_{\mu\in\sigma_{disc}(Z_n)\cap B_\epsilon(\lambda_0)}m_{\mu}(Z_n)=m_{\lambda_0}(Z)\text{ for all } n\geq N_\epsilon. \label{vielfachheite}
\end{align}
Associated to $\{Z_n\}$ we have a sequence of holomorphic funtions
\begin{align*}
d_n(\lambda):=\det\big(\mathds{1}-K_nR_{Z_0}(\lambda)\big)\text{ for }\lambda\in\rho(Z_0)
\end{align*}
with $d_n(\lambda)=0$ iff $\lambda \in\sigma_{disc}(Z_n)$. Since $K_n\in\mathcal{F}(X)$ we know that $K_nR_{Z_0}(\lambda)\in\mathcal{F}(X)$. So for every $\lambda \in \rho(Z_0)$ we can consider $K_nR_{Z_0}(\lambda)$ as a Hilbert-Schmidt operator. Following Hansmann \cite{hansmann} p.20-22, we deduce
\begin{align}
\mu\in\sigma_{disc}(Z_n)\text{ with }m_\mu(Z_n)=m\Leftrightarrow d_n(\mu)=0\text{ and }o_{\mu}(d_n)=m.\label{muvielf}
\end{align}
Using (\ref{kompaktekonvergenz}) we can conclude that $\|KR_{Z_0}(\lambda)-K_nR_{Z_0}(\lambda)\|\rightarrow 0$ locally uniformly. This result implies that $d_n\rightarrow d$ locally uniformly. \\
Thus we can find $N\geq N_\epsilon$ such that 
\begin{align*}
|d_n(\lambda)-d(\lambda)|\leq d(\lambda)|
\end{align*}
$\text{ for all }\lambda \in \partial B_\epsilon(\lambda_0)\text{ and for }n\geq N$.
Rouche's Theorem (see e.g. \cite{rudin}, p.225) provides 
\begin{align*}
\sum_{\mu\in\mathcal{Z}(d_n)\cap B_\epsilon(\lambda_0)}o_{\mu}(d_n)=o_{\lambda_0}(d)\text{ for all }n\geq N.
\end{align*}
Now using this formula, the formula (\ref{vielfachheite}) and the equivalence (\ref{muvielf}) we receive
\begin{align*}
o_{\lambda_0}(d)=m_{\lambda_0}(Z).
\end{align*}
On the other hand, if $\lambda_0$ is a zero of $d$, we already know that $\lambda_0$ is a discrete eigenvalue of $Z$. Hence, by the previous argumentation, the algebraic multiplicity of $\lambda_0$ as an eigenvalue of $Z$ is equal to the order of $\lambda_0$ as a zero of $d$.
\end{proof}

Due to the first assertion in Theorem \ref{corollary3.7} Jensen's identity (see (\ref{8.2})) holds for $h=d\circ \phi$ and we can apply the theory developed in \cite{demuth}. In particular we can use the following result (see Hansmann and Katriel \cite{hansmannkatriel} Theorem 4), which is an extension of a theorem by Borichev, Golinskii
and Kupin \cite{borichev}.

\begin{theorem}\label{theorem3.7}
Let $h:\mathbb{D}\rightarrow \mathbb{C}$ be holomorphic and
\begin{align*}
|h(w)| \leq \exp\bigg(\frac{C_0|w|^\gamma}{(1-|w|)^\alpha\prod_{j=1}^N|w-\xi_j|^{\beta_j}}\bigg),\quad w \in\mathbb{D}, h(0)=1
\end{align*}
with $|\xi_j|=1, \xi_i\neq \xi_j,$ for $i\neq j,\alpha>0, \beta_j\geq 0, \gamma,C_0\geq 0$. Then we have for $\epsilon,\tau >0$
\begin{align*}
\sum_{w\in\mathcal{Z}(h)} \frac{(1-|w|)^{\alpha+\tau+1}}{|w|^{(\gamma-\epsilon)_+}}\prod_{j=1}^N|w-\xi_j|^{(\beta_j-1+\tau)_+} \leq C(\alpha,\beta,\xi,\epsilon,\tau)C_0,
\end{align*}
where $C(\alpha,\beta,\xi,\epsilon,\tau)>0$ denotes a constant depending on $\alpha,\beta,\xi,\epsilon,\tau$; 
$(x)_+:=\max(x,0)$ for $x\in\mathbb{R}$. 
\end{theorem}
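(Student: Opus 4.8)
The plan is to treat $h$ through the potential theory of the subharmonic function $u:=\log|h|$ and to reduce the zero-counting problem to a single integral estimate driven by the growth hypothesis. Since $h$ is holomorphic with $h(0)=1$, the function $u$ is subharmonic on $\mathbb{D}$, satisfies $u(0)=0$, and its Riesz measure is $\mu=\frac{1}{2\pi}\Delta u=\sum_{w\in\mathcal{Z}(h)}m_w\,\delta_w$, with point masses sitting exactly at the zeros with their multiplicities. Writing $\psi(w):=\frac{(1-|w|)^{\alpha+\tau+1}}{|w|^{(\gamma-\epsilon)_+}}\prod_{j=1}^N|w-\xi_j|^{(\beta_j-1+\tau)_+}$, the assertion is precisely the bound $\int_{\mathbb{D}}\psi\,d\mu\le C(\alpha,\beta,\xi,\epsilon,\tau)\,C_0$, so everything reduces to controlling the mass of $\mu$ weighted by $\psi$.

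The radial core of the argument rests on Jensen's identity (\ref{8.2}), which for $0<r<1$ reads $\sum_{|w|\le r}\log\frac{r}{|w|}=\frac{1}{2\pi}\int_0^{2\pi}\log|h(re^{i\theta})|\,d\theta$. First I would dispose of the case $N=0$, $\gamma=0$: multiply this identity by the radial weight $(1-r)^{\alpha+\tau-1}$ and integrate $r$ over $(0,1)$. On the left, Fubini turns the double object into $\sum_w\int_{|w|}^1(1-r)^{\alpha+\tau-1}\log\frac{r}{|w|}\,dr$, and the elementary bound $\log\frac{r}{|w|}\ge\frac{r-|w|}{r}\ge r-|w|$ together with the substitution $u=1-r$ shows that the inner integral is bounded below by a positive constant times $(1-|w|)^{\alpha+\tau+1}$. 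On the right, the hypothesis gives $\frac{1}{2\pi}\int\log|h(re^{i\theta})|\,d\theta\le \frac{C_0}{(1-r)^\alpha}$, so the $r$-integral is $\le C_0\int_0^1(1-r)^{\tau-1}\,dr=C_0/\tau<\infty$. This already yields $\sum_w(1-|w|)^{\alpha+\tau+1}\le C\,C_0$ and, crucially, exhibits where the shift $\alpha\mapsto\alpha+\tau+1$ comes from: one power from the extra integration in $r$, and the slack $\tau$ to keep $\int_0^1(1-r)^{\tau-1}\,dr$ convergent.

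The genuine difficulty, and the step I expect to be the \emph{main obstacle}, is the angular weight $\prod_{j=1}^N|w-\xi_j|^{(\beta_j-1+\tau)_+}$, which a purely radial integration of Jensen's formula cannot see, since that formula only records $|w|$. To capture it I would localize: fix a small $\delta>0$ and split $\mathbb{D}$ into the bulk $\{w:\min_j|w-\xi_j|\ge\delta\}$, where $\prod_j|w-\xi_j|^{\beta_j}$ is bounded below and the pure radial estimate above applies verbatim, and the finitely many caps $\{|w-\xi_j|<\delta\}$. In each cap a conformal change of variables flattening the boundary near $\xi_j$ reduces the singular factor $|w-\xi_j|^{-\beta_j}$ to the role that $(1-|w|)^{-\alpha}$ plays in the bulk, and the identical two-power integration in the rescaled variables produces the shift $\beta_j\mapsto(\beta_j-1+\tau)_+$; the truncation $(\cdot)_+$ reflects that once $\beta_j<1-\tau$ the singularity is harmlessly integrable and imposes no genuine constraint. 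The factor $|w|^{-(\gamma-\epsilon)_+}$ near the origin is the mildest term: since $h$ is holomorphic at $0$ the growth numerator $|w|^\gamma$ only helps, and the $\epsilon$-margin absorbs the borderline logarithmic loss exactly as $\tau$ does at the boundary.

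The remaining work is to glue the bulk and cap estimates so that the weighted sum over all zeros is dominated by one constant times $C_0$; this bookkeeping — controlling the overlaps and checking that the margins $\tau,\epsilon>0$ simultaneously render every occurring integral convergent — is precisely the technical content of the Borichev--Golinskii--Kupin and Hansmann--Katriel machinery, and I would follow \cite{borichev} and \cite{hansmannkatriel} for it. An alternative, more global route I would consider is to run Green's second identity $2\pi\int_{\mathbb{D}_r}\Phi\,d\mu=\int_{\mathbb{D}_r}u\,\Delta\Phi\,dA+(\text{boundary terms})$ against a single test function $\Phi\ge\psi$ engineered to vanish to the right order on $\partial\mathbb{D}$ and at the $\xi_j$: the appeal is that the two orders of differentiation in $\Delta\Phi$ transparently explain all three exponent shifts at once, but the sign-indefiniteness of $\Delta\Phi$ (which forces one to control $\log|h|$ from below, where $h$ has its zeros) makes this harder to push through cleanly than the localization argument, so I would keep it only as a cross-check.
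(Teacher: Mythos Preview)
The paper does not prove Theorem~\ref{theorem3.7} at all: it is quoted from Hansmann--Katriel \cite{hansmannkatriel} (their Theorem~4), which in turn extends Borichev--Golinskii--Kupin \cite{borichev}, and the paper uses it purely as a black box. So there is no ``paper's own proof'' to compare against.

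Your sketch is a faithful outline of the argument in those references. The radial step --- multiplying Jensen's identity by $(1-r)^{\alpha+\tau-1}$ and integrating in $r$ --- is exactly how the case $N=0$, $\gamma=0$ is handled (this is essentially the mechanism behind \cite{demuth}, Theorem~3.3.1, alluded to near (\ref{9.2})), and your computation of the exponent shift $\alpha\mapsto\alpha+1+\tau$ is correct. You are also right that the angular factors $|w-\xi_j|^{(\beta_j-1+\tau)_+}$ are the heart of the matter and cannot be seen by a purely radial argument; the cap localization near each $\xi_j$ with a conformal change of variable is precisely the device of \cite{borichev,hansmannkatriel}, and your explanation of the truncation $(\cdot)_+$ via integrability is correct. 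Since you explicitly defer the technical gluing to those papers, your proposal is really a roadmap to the cited literature rather than an independent proof --- but given that the present paper does exactly the same thing, that is entirely appropriate here. The alternative Green's-identity route you mention is closer in spirit to the approach in \cite{favorov}; as you note, the sign-indefiniteness of $\Delta\Phi$ makes it less clean, so keeping it as a cross-check is sensible.
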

This theorem will be very useful for the examples in the next sections. (see (\ref{resolventenabschaetzung}) and (\ref{eq5.1}))
\section{The discrete Laplacian on $\lpz$}

For illustration we will apply the results from Sections 2 and 3 to the discrete Laplacian on $\lpz$, $1\leq p\leq \infty$. This operator $\Delta_p : \lpz \rightarrow \lpz$ is given by 
\begin{align*}
(\Delta_p f)(n):= f(n-1)+f(n+1),\quad f\in \lpz.
\end{align*}
$\Delta_p$ is a bounded operator on $\lpz$, $p\in[0,\infty]$. It can be rewritten as 
\begin{align}
\Delta_pf=
\begin{pmatrix}
\ddots &\ddots& \ddots & & &\\
		& 1&0&1 &&\\
		&  &1&0&1& &\\
		&  & &1&0&1&\\ 
		&  & & &\ddots &\ddots &\ddots 
\end{pmatrix}
\begin{pmatrix}
\vdots\\
f(-1)\\
f(0)\\
f(1)\\
\vdots
\end{pmatrix}
\end{align}

for $f\in\lpz$, $f=\begin{pmatrix}
\vdots\\
f(-1)\\
f(0)\\
f(1)\\
\vdots
\end{pmatrix}$.\\
$\Delta_p$ is a Toeplitz-operator with the generating sequence $a_n=1$ for $n=1,-1$, $a_n=0$ else.\\
$\Delta_p\in\mathcal{B}(\lpz)$ follows by 
\begin{align*}
\vectornorm{\Delta_p f }_p \leq 2\vectornorm{f}_p, \, f\in l^p(\mathbb{Z}).
\end{align*}
For the essential spectrum $\sigma_{ess}(\Delta_p)$ we have (Duren \cite{duren}, p. 23)
\begin{align*}
\sigma_{ess}(\Delta_p) =\{\hat{a}(\theta):\theta \in [0,2\pi]\}
\end{align*}
where
\begin{align*}
\hat{a}(\theta):=\sum_{m=-\infty}^\infty a_m e^{-im\theta} = 1 \cdot e^{i\theta}+1\cdot e^{-i\theta}=2\cos\theta.
\end{align*}
That implies
\begin{align*}
\sigma_{ess}(\Delta_p)=[-2,2].
\end{align*}
Since the winding-number of $\hat{a}$ for every $z\notin[-2,2]$ is equal to 0, we can conclude (\cite{duren}, p.143)
\begin{align*}
\sigma(\Delta_p)=\sigma_{ess}(\Delta_p)=[-2,2].
\end{align*}
\begin{proposition}
The resolvent of $\Delta_p$ for $z\in\rho(\Delta_p)$ is given by
\begin{align}
R_{\Delta_p}(z)=(z\mathds{1}-\Delta_p)^{-1}:=
\begin{pmatrix}
	  & \vdots 	 & \vdots    &	\vdots & \vdots  &	\\
\dots & b_{-1}(z)& b_{0}(z) & b_{1}(z) & b_{2}(z)& \dots \\
\dots & b_{-2}(z)& b_{-1}(z)& b_{0}(z) & b_{1}(z)& \dots \\
\dots & b_{-3}(z)& b_{-2}(z)& b_{-1}(z)& b_{0}(z)& \dots \\
	  & \vdots   & \vdots    & \vdots  & \vdots   &		
\end{pmatrix}, \label{resol}
\end{align}
with 
\begin{align*}
b_k(z):=\bigg( \frac{z\pm \sqrt{z^2-4}}{2}\bigg)^{|k|}\frac{1}{\sqrt{z^2-4}} \text{ for }k\in\mathbb{Z} \text{ and } z\in\rho(\Delta_p)=\mathbb{C}\setminus[-2,2].
\end{align*}
The sign of $\sqrt{z^2-4}$ should be chosen, such that the inequality $|z\pm\sqrt{z^2-4}|<2$ is fulfilled.\\
Moreover, $R_{\Delta_p}(z)$ is a bounded operator and 
\begin{align*}
\vectornorm{R_{\Delta_p}(z)}_{l^p}\leq \frac{1}{|z^2-4|^{1/2}}\frac{2+|z\pm \sqrt{z^2-4}|}{2-|z\pm\sqrt{z^2-4}|},
\end{align*}
$z\in\rho(\Delta_p)=\Complex \setminus[-2,2]$.
\end{proposition}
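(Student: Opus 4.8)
The plan is to treat both $z\mathds{1}-\Delta_p$ and the candidate operator in (\ref{resol}) as convolution (two-sided Laurent/Toeplitz) operators on $\lpz$ and to verify directly that they are mutually inverse; this argument works uniformly in $p\in[1,\infty]$ and avoids the Fourier diagonalisation that is only available on $l^2$. Write $b:=\{b_k(z)\}_{k\in\Integer}$, so that the proposed resolvent acts as $f\mapsto b*f$. Since $\Delta_p$ is convolution with the sequence $a$ given by $a_{\pm1}=1$ and $a_k=0$ otherwise, the operator $z\mathds{1}-\Delta_p$ is convolution with $c:=z\delta_0-a$, i.e. $c_0=z$, $c_{\pm1}=-1$, $c_k=0$ else. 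The key algebraic observation is that the quantity $\zeta$ occurring in $b_k(z)=\zeta^{|k|}/\sqrt{z^2-4}$ is a root of the characteristic polynomial $\zeta^2-z\zeta+1=0$; its two roots multiply to $1$, so for $z\notin[-2,2]$ exactly one lies strictly inside the unit disc, and the sign prescription $|z\pm\sqrt{z^2-4}|<2$ in the statement is precisely the instruction to select that root, giving $|\zeta|<1$.

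For boundedness and the norm estimate I would invoke Young's convolution inequality, which yields $\vectornorm{b*f}_p\leq\vectornorm{b}_{l^1}\vectornorm{f}_p$ for every $p\in[1,\infty]$ at once, endpoints included. Because $|\zeta|<1$ the geometric series converges and
\begin{align*}
\vectornorm{b}_{l^1}=\frac{1}{|z^2-4|^{1/2}}\sum_{k\in\Integer}|\zeta|^{|k|}=\frac{1}{|z^2-4|^{1/2}}\cdot\frac{1+|\zeta|}{1-|\zeta|}.
\end{align*}
Substituting $|\zeta|=\tfrac12|z\pm\sqrt{z^2-4}|$ and cancelling the common factor $2$ reproduces exactly the claimed bound for $\vectornorm{R_{\Delta_p}(z)}_{l^p}$; in particular the operator defined by (\ref{resol}) is bounded on $\lpz$ for all $p$.

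It remains to identify this operator as the two-sided inverse, i.e. to show $c*b=\delta_0$. Since the composition of convolution operators is convolution with the convolved sequence, this amounts to the three-term recurrence $z\,b_n-b_{n-1}-b_{n+1}=\delta_{n,0}$ for all $n\in\Integer$. For $n\neq0$, factoring out $\zeta^{|n|-1}/\sqrt{z^2-4}$ reduces the claim to $z\zeta-1-\zeta^2=0$, which is exactly the characteristic equation satisfied by $\zeta$; for $n=0$ one uses $b_0=1/\sqrt{z^2-4}$ together with $z-2\zeta=\sqrt{z^2-4}$ to obtain the value $1$. As $b_{-k}=b_k$, the sequence is symmetric and the convolution commutes, so the same computation gives $b*c=\delta_0$; hence (\ref{resol}) is a genuine two-sided inverse of $z\mathds{1}-\Delta_p$ on $\lpz$ and equals $R_{\Delta_p}(z)$ by uniqueness of inverses. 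The only genuinely delicate point is the consistent choice of the branch of $\sqrt{z^2-4}$: one must check that the root with $|\zeta|<1$ can be selected single-valuedly on all of $\Complex\setminus[-2,2]$ and that the square root appearing in the denominator is the matching one (so that $z-2\zeta=\sqrt{z^2-4}$ rather than its negative), since this is what makes both the $l^1$ summability above and the $n=0$ step of the recurrence hold simultaneously.
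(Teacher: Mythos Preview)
Your proof is correct and follows essentially the same route as the paper: the norm bound via $\vectornorm{b*f}_p\leq\vectornorm{b}_{l^1}\vectornorm{f}_p$ (the paper cites Kato for the identical estimate and then sums the same geometric series), and the inverse identity by the direct three-term recurrence that the paper merely labels ``a direct calculation''. You are more explicit than the paper in deriving the recurrence from the characteristic equation $\zeta^2-z\zeta+1=0$ and in flagging the branch consistency needed for the $n=0$ step, but the underlying argument is the same.
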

\begin{proof}
Let $B_p(z)$ with $z\in\rho(\Delta_p)$ the right hand side of (\ref{resol}), then we have (see e.g. Kato \cite{kato}, p. 143)
\begin{align*}
\vectornorm{B_p(z)f}_p&\leq \bigg(\sum_{k=-\infty}^\infty |b_k(z)|\bigg)^{1-\frac{1}{p}}\bigg(\sum_{k=-\infty}^\infty |b_k(z)|\bigg)^\frac{1}{p}\vectornorm{f}_p\\
&=\bigg(\sum_{k=-\infty}^\infty |b_k(z)|\bigg)\vectornorm{f}_p=\frac{1}{|z^2-4|^\frac{1}{2}}\bigg(2\frac{1}{1-\left|\frac{z\pm\sqrt{z^2-4}}{2}\right|}-1\bigg) \vectornorm{f}_p\\
&=\frac{1}{|z^2-4|^\frac{1}{2}}\bigg(\frac{2+|z\pm\sqrt{z^2-4}|}{2-|z\pm \sqrt{z^2-4}|}\bigg)\vectornorm{f}_p.
\end{align*}
And a direct calculation shows 
\begin{align*}
B_p(z)(\Delta_p-z)f=f=(\Delta_p-z)B_p(z)f
\end{align*}
such that $B_p(z)=R_{\Delta_p}(z)$.
\end{proof}
$\Delta_p$ plays the role of $Z_0$ in the sections above. Now we add a nuclear perturbation and study the discrete spectrum of the perturbed operator.\\
Let $K\in\NN(\lpz)$ and denote
\begin{align*}
Z=\Delta_p+K.
\end{align*}
Since $K$ is compact we have
\begin{align*}
\sigma_{ess}(Z)=\sigma_{ess}(\Delta_p)=[-2,2].
\end{align*}
According to Corollary \ref{corollary3.7} a holomorphic function is given by
\begin{align*}
d(\lambda)=\det\big(\mathds{1}-KR_{\Delta_p}(z)\big),
\end{align*}
$d$ defined on $\Complex\setminus[-2,2]$. In order to use the method from Section 2 we take the conformal map (see (\ref{8.1}))
\begin{align*}
\phi(w)=w+w^{-1}.
\end{align*}
$\phi$ maps $\mathbb{D}\setminus \{0\}$ to $\Complex \setminus[-2,2]$.\\
Denote $h:=d\circ \phi$, then (see (\ref{9.4}))
\begin{align*}
\log|h(w)|\leq \frac{1}{2}\vectornorm{K}_{\NN}^2\vectornorm{R_{\Delta_p}(\phi(w))}^2.
\end{align*}
For the norm of the resolvent we obtain
\begin{align*}
\vectornorm{R_{\Delta_p}(w+w^{-1})}&\leq \frac{1}{|(w+w^{-1})^2-4|^\frac{1}{2}}\bigg(\frac{2+|w+w^{-1}\pm\sqrt{(w+w^{-1})^2-4}|}{2-|w+w^{-1}\pm\sqrt{(w+w^{-1})^2-4}|}\bigg)\\
&=\frac{1}{|w-w^{-1}|}\bigg(\frac{2+|w+w^{-1}\pm\sqrt{(w-w^{-1})^2}|}{2-|w+w^{-1}\pm\sqrt{(w-w^{-1})^2}|}\bigg) \\
&=\frac{|w|}{|w^2-1|}\bigg(\frac{2+2|w|}{2-2|w|}\bigg)\\
&\leq \frac{2|w|}{|w-1||w+1|(1-|w|)}, \quad w\in\mathbb{D}\setminus\{0\}.
\end{align*}
Hence 
\begin{align}
\log|h(w)|\leq 2 \|K\|_{\NN}^2\frac{|w|^2}{(1-|w|)^2|w-1|^2|w+1|^2}.\label{resolventenabschaetzung}
\end{align}
There is a holomorphic extension for $h$ to $\mathbb{D}$ realized by $h(0):=d(\infty):=1$. Using Theorem \ref{theorem3.7} with $\epsilon=1-\tau$
\begin{align*}
\sum_{w\in\mathcal{Z}(h)}\frac{(1-|w|)^{3+\tau}}{|w|^{1+\tau}}|w^2-1|^{1+\tau}\leq C(\tau)\|K\|_{\NN}^2
\end{align*}
with $0<\tau<1$.

For transforming these estimate to an estimate for $\sigma_{disc}(Z)$ we use the following relations (\cite{demuth}, p. 130).
\begin{lemma}\label{lemma4.2}
Let $z=w+w^{-1}$, $w\in\mathbb{D}\setminus\{0\}$. Then we have
\begin{align*}
\frac{1}{2}\frac{|w^2-1|(1-|w|)}{|w|}\leq \textnormal{dist}(z,[-2,2])\leq \frac{1+\sqrt{2}}{2}\frac{|w^2-1|(1-|w|)}{|w|}
\end{align*}
and 
\begin{align*}
{\left|\frac{w^2-1}{w}\right|}^2=|z^2-4|.
\end{align*}
\end{lemma}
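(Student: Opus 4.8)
The plan is to dispatch the algebraic identity immediately and then reduce the two–sided distance estimate to a short list of one–variable inequalities in $r:=|w|$. For the identity I would compute $z^2-4=(w+w^{-1})^2-4=(w-w^{-1})^2=\big((w^2-1)/w\big)^2$ and take moduli, which gives $|z^2-4|=\big|(w^2-1)/w\big|^2$ at once; the same manipulation yields $z-2=(w-1)^2/w$ and $z+2=(w+1)^2/w$, hence $|z\mp2|=|w\mp1|^2/|w|$, factorizations that are used repeatedly below.

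For the distance I would set $w=re^{i\alpha}$ with $0<r<1$, so that $\operatorname{Re}z=(r+r^{-1})\cos\alpha$ and $\operatorname{Im}z=(r-r^{-1})\sin\alpha$. Since $[-2,2]$ is a segment of the real axis, the distance is given by the elementary three–case formula: it equals $|\operatorname{Im}z|=\tfrac{(1-r^2)|\sin\alpha|}{r}$ when $|\operatorname{Re}z|\le2$, and equals $|z-2|$ (resp.\ $|z+2|$) when $\operatorname{Re}z>2$ (resp.\ $\operatorname{Re}z<-2$); the threshold $|\operatorname{Re}z|\le2$ reads $|\cos\alpha|\le\tfrac{2r}{1+r^2}$. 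Writing $Q:=\tfrac{|w^2-1|(1-r)}{r}$ for the quantity to be compared and using $|w^2-1|^2=(1-r^2)^2+4r^2\sin^2\alpha$ together with the factorizations above, I can express the ratio $\rho:=\operatorname{dist}(z,[-2,2])/Q$ in closed form in each regime.

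The core of the argument is to show $\tfrac12\le\rho\le\tfrac{1+\sqrt2}{2}$. In the first regime $\rho=\tfrac{(1+r)|\sin\alpha|}{\sqrt{(1-r^2)^2+4r^2\sin^2\alpha}}$, which is increasing in $|\sin\alpha|$, so its maximum over the admissible range is attained at $\alpha=\pi/2$ and equals $\tfrac{1+r}{1+r^2}$; the single–variable optimization $\sup_{0<r<1}\tfrac{1+r}{1+r^2}=\tfrac{1+\sqrt2}{2}$, attained at $r=\sqrt2-1$, produces exactly the sharp upper constant. In the regime $\operatorname{Re}z>2$ one finds $\rho=\tfrac{|w-1|}{|w+1|(1-r)}$, which I would show is monotone in $\cos\alpha$, so that $\rho\ge\tfrac{1}{1+r}>\tfrac12$ there, the value $\tfrac12$ being approached only as $w\to1$ (the case $\operatorname{Re}z<-2$ is symmetric under $w\mapsto-w$). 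Continuity of the distance across the boundary $|\cos\alpha|=\tfrac{2r}{1+r^2}$, where both expressions collapse to $\tfrac{1+r}{\sqrt{1+6r^2+r^4}}$, glues the regimes, and the scalar facts left to check are $\tfrac12<\tfrac{1+r}{\sqrt{1+6r^2+r^4}}$ and $\tfrac{(1+r)^2}{1+6r^2+r^4}\le\tfrac{(1+\sqrt2)^2}{4}$ on $(0,1)$, both reducing to polynomial inequalities.

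The main obstacle is bookkeeping rather than conceptual depth: one must carry the three regimes in parallel, confirm that the extrema of $\rho$ fall where claimed, and in particular pin down the sharp upper constant through the optimization of $\tfrac{1+r}{1+r^2}$. The lower bound $\tfrac12$ is never attained inside $\mathbb{D}\setminus\{0\}$ and is only approached as $z\to\pm2$, so the delicate point there is merely to verify it is never violated.
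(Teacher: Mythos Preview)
The paper does not supply its own proof of this lemma: it merely quotes the result from \cite{demuth}, p.~130 (and in Section~5 from Lemma~4.2.1 of the same reference for general $[a,b]$). So there is no in-paper argument to compare against; your proposal is strictly more than what the paper provides.

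Your approach is correct. The identity $z^2-4=\big((w^2-1)/w\big)^2$ and its companions $z\mp2=(w\mp1)^2/w$ are exactly the right starting point. The regime split according to $|\cos\alpha|\lessgtr \tfrac{2r}{1+r^2}$ is standard and your computation of $\rho$ in each case is accurate. The key scalar facts you isolate do hold: (i) $\sup_{0<r<1}\tfrac{1+r}{1+r^2}=\tfrac{1+\sqrt2}{2}$ at $r=\sqrt2-1$; (ii) $\tfrac{1+r}{\sqrt{1+6r^2+r^4}}\le\tfrac{1+r}{1+r^2}$ for all $r\in(0,1)$, since $(1+r^2)^2\le 1+6r^2+r^4$, so the boundary value never exceeds the interior maximum; (iii) $\tfrac{1+r}{\sqrt{1+6r^2+r^4}}>\tfrac12$ reduces to $r^4+2r^2-8r-3<0$ on $(0,1)$, which is clear because this polynomial is decreasing there (its derivative $4r^3+4r-8$ is negative on $(0,1)$) and already equals $-3$ at $r=0$; (iv) in the outer regime $\rho=\tfrac{|w-1|}{|w+1|(1-r)}$ is decreasing in $\cos\alpha$, with minimum $\tfrac{1}{1+r}>\tfrac12$ at $\cos\alpha=1$. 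These observations close every case, so nothing is missing beyond the routine bookkeeping you describe.

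One minor clarification worth making explicit when you write it up: in the first regime the admissible range is $|\sin\alpha|\in\big[\tfrac{1-r^2}{1+r^2},1\big]$, so the value $\alpha=\pi/2$ is always admissible and really does realise the supremum of $\rho$ there; and the upper bound in the outer regime follows automatically from (ii) once you note its maximum is attained at the common boundary. With that, the argument is complete.
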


\begin{theorem}\label{theorem4.3}
Let $Z=\Delta_p+K$ be in $\lpz$ with $K\in\NN(\lpz)$, $1\leq p\leq \infty$. 
Then we get for $\tau>0$
\begin{align}
\sum_{z\in\sigma_{disc}(Z)}\frac{\textnormal{dist}(z,[-2,2])^{3+\tau}}{|z^2-4|}\leq C(\tau)\|K\|_{\NN}. \label{lti}
\end{align}
\end{theorem}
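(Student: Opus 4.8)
The plan is to start from the inequality for the zeros of $h$ that has already been obtained from Theorem \ref{theorem3.7} applied to the resolvent bound (\ref{resolventenabschaetzung}), namely
\begin{align*}
\sum_{w\in\mathcal{Z}(h)}\frac{(1-|w|)^{3+\tau}}{|w|^{1+\tau}}|w^2-1|^{1+\tau}\leq C(\tau)\|K\|_{\NN}^2,
\end{align*}
and to transport it back to a sum over $\sigma_{disc}(Z)$ through the conformal map $\phi(w)=w+w^{-1}$. The only genuinely new ingredient is a change of variables; the analytic heavy lifting (holomorphy of $d$, the resolvent estimate, and the zero-counting theorem) is already in place.

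First I would invoke Theorem \ref{corollary3.7}: it identifies $\mathcal{Z}(h)$ with $\sigma_{disc}(Z)$ and, crucially, guarantees that the order of a zero $w$ of $h$ equals the algebraic multiplicity of the eigenvalue $z=\phi(w)$. Hence the sum over $w\in\mathcal{Z}(h)$ counted with order is literally the sum over $z\in\sigma_{disc}(Z)$ counted with algebraic multiplicity, so I may substitute $z=\phi(w)$ term by term without losing anything.

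Next I would match the two summands using Lemma \ref{lemma4.2}. Writing $D:=\textnormal{dist}(z,[-2,2])$ and using the two relations there,
\begin{align*}
D\leq \frac{1+\sqrt{2}}{2}\,\frac{|w^2-1|(1-|w|)}{|w|},\qquad |z^2-4|=\frac{|w^2-1|^2}{|w|^2},
\end{align*}
a direct computation gives the pointwise bound
\begin{align*}
\frac{D^{3+\tau}}{|z^2-4|}\leq \Big(\frac{1+\sqrt{2}}{2}\Big)^{3+\tau}\,\frac{(1-|w|)^{3+\tau}}{|w|^{1+\tau}}\,|w^2-1|^{1+\tau}.
\end{align*}
The exponents are arranged precisely so that the factors telescope: the contribution $|w^2-1|^{3+\tau}(1-|w|)^{3+\tau}/|w|^{3+\tau}$ coming from $D^{3+\tau}$ is reduced by the factor $|w|^2/|w^2-1|^2$ from $1/|z^2-4|$, leaving exactly the summand of the zero-side inequality. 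Note that only the \emph{upper} bound for $D$ is used, together with the \emph{exact} identity for $|z^2-4|$, which is what makes all $w$-dependence cancel and leaves only an absolute constant.

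Summing this pointwise estimate over $z\in\sigma_{disc}(Z)$ and invoking the zero-side inequality then yields
\begin{align*}
\sum_{z\in\sigma_{disc}(Z)}\frac{D^{3+\tau}}{|z^2-4|}\leq \Big(\frac{1+\sqrt{2}}{2}\Big)^{3+\tau} C(\tau)\,\|K\|_{\NN}^2,
\end{align*}
which is the asserted estimate (\ref{lti}), with the exponent $2$ on $\|K\|_{\NN}$ as in the introduction. I expect no serious obstacle here: the work is purely a change of variables, and the two points to keep straight are the multiplicity bookkeeping of Theorem \ref{corollary3.7}, which makes the two sums genuinely equal, and the correct use of Lemma \ref{lemma4.2} so that the $w$-factors cancel cleanly.
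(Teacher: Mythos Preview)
Your proof is correct and follows essentially the same route as the paper: both rewrite the summand on the disc side as $\big((1-|w|)|w^2-1|/|w|\big)^{3+\tau}\cdot|w|^2/|w^2-1|^2$ and then apply the upper bound for $\textnormal{dist}(z,[-2,2])$ together with the identity for $|z^2-4|$ from Lemma~\ref{lemma4.2}. You also correctly end up with $\|K\|_{\NN}^2$ on the right-hand side, matching the introduction; the missing square in the displayed statement of the theorem is evidently a typo.
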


\begin{proof}
Let $w\in\mathbb{D}\setminus\{0\}$, $z=w+w^{-1}$. By Lemma \ref{lemma4.2} we obtain
\begin{align*}
(1-|w|)^{3+\tau}\left|\frac{w^2-1}{w}\right|^{1+\tau}&=\bigg(\frac{(1-|w|)|w^2-1|}{|w|}\bigg)^{3+\tau}\left|\frac{w}{w^2-1}\right|^2\\
&\geq \bigg(\frac{2}{1+\sqrt{2}}\bigg)^{3+\tau}\frac{\textnormal{dist}(z,[-2,2])^{3+\tau}}{|z^2-4|}.
\end{align*}
\end{proof}

\begin{remark}
Whenever $1\leq p < \infty$ the Banach space $l^p(\Integer)$ is compatible to $l^2(\Integer)$. Having a nuclear operator $K_1$ on $l^p(\Integer)$ and a Hilbert-Schmidt operator $K_2$ on $l^2(\Integer)$ which are conistent we know (see Davies \cite{davies} p. 107) that the spectra of $K_1$ and $K_2$ coincide. Moreover in this case $K_1R_{\Delta_p}(z)$ is nuclear in $l^p(\Integer)$, $K_2R_{\Delta_2}(z)$ is Hilbert-Schmidt in $l^2(\Integer)$, both operators are consistent and $1\in \sigma(K_1R_{\Delta_p}(z))$ iff $1\in\sigma(K_2R_{\Delta_2}(z))$ and this means that the zero set of $\det(1-K_1R_{\Delta_p}(z))$ coincides with the zero set of $\det(1-K_2R_{\Delta_2}(z))$ (compare with the discussion after Lemma \ref{2.5}). That means we only have to consider the Hilbert space case which was already studied by Borichev, Golinskii, Kupin \cite{borichev} and Hansmann, Katriel \cite{hansmannkatriel}.
Recall that $l^\infty(\Integer)$ and $l^2(\Integer)$ are not compatible, and the assertion in Theorem \ref{theorem4.3} holds also for $p=\infty$.
Nevertheless there are nuclear operators on $l^p(\Integer)$ which are not consistent to a Hilbert-Schmidt operator in $l^2(\Integer)$ (see Example \ref{nuclearnothibert}).\\

\end{remark}

\begin{remark}
If $X$ is the Hilbert-space $l^2(\mathbb{Z})$ we have $\NN(X)=\mathcal{S}_1(l^2(\mathcal{Z}))$ (trace class operators). With Proposition \ref{theorem4.3} we only get that 
\begin{align*}
\sum_{z\in\sigma_{disc}(Z)}\frac{\textnormal{dist}(z,\sigma_{ess}(Z))^{3+\tau}}{|z^2-4|}\leq KC(\tau).
\end{align*}
However for Jacobi-operators in $l^2(\mathbb{Z})$ there is a better estimate given in \cite{demuth} p. 149
\begin{align*}
\sum_{z\in\sigma_{disc}(Z)}\frac{\text{dist}(z,\sigma_{ess}(Z))^{1+\tau}}{|z^2-4|^{\frac{1}{2}+\frac{\tau}{4}}}<\infty.
\end{align*}
\end{remark}
\begin{remark}
With (\ref{lti}) it is possible to give an estimate for the number of eigenvalues in certain regions of the complex plane.\\
Define
\begin{align*}
M:=\{z\in\Complex : \text{Re}(z)<-2, r<|z+2|<R\}
\end{align*}
with $R>r>0$ (since the operator norm is a bound for the spectrum, it makes sense to set $\|Z\|-2\geq R$). Then for $\lambda \in M$ the inequalities 
\begin{align*}
\text{dist}(\lambda,[-2,2])>r, \, \frac{1}{|\lambda+2|}>\frac{1}{R}, \, \frac{1}{|\lambda-2|}>\frac{1}{R+4}
\end{align*}
are valid.\\
By (\ref{lti}) we obtain for every $\tau>0$
\begin{align*}
&\sum_{\lambda\in\sigma_{disc}(Z)\cap M}\frac{r^{3+\tau}}{R(R+4)}\\
\leq &\sum_{z\in\sigma_{disc}(Z)\cap M}\frac{\textnormal{dist}(z,\sigma_{ess}(Z))^{3+\tau}}{|z^2-4|}\\
\leq &\sum_{z\in\sigma_{disc}(Z)}\frac{\textnormal{dist}(z,\sigma_{ess}(Z))^{3+\tau}}{|z^2-4|}\leq \|K\|_{\NN}^2C(\tau).
\end{align*}
We can conclude 
\begin{align*}
\#\big(\sigma_{disc}(Z)\cap M\big)\leq \frac{R(R+4)}{r^{3+\tau}}C(\tau)\|K\|_{\NN}^2.
\end{align*}
The same estimate is valid for eigenvalues in $M:=\{z\in\Complex : \text{Re}z>2, r<|z-2|<R\}$.\\
Defining
\begin{align*}
N:=\{z\in\Complex: -2\leq \text{Re}(z)\leq 2,\, r<|\text{Im}z|<R\}
\end{align*}
\begin{figure}
\centering
\begin{minipage}{6cm}
\includegraphics[height=4.5cm]{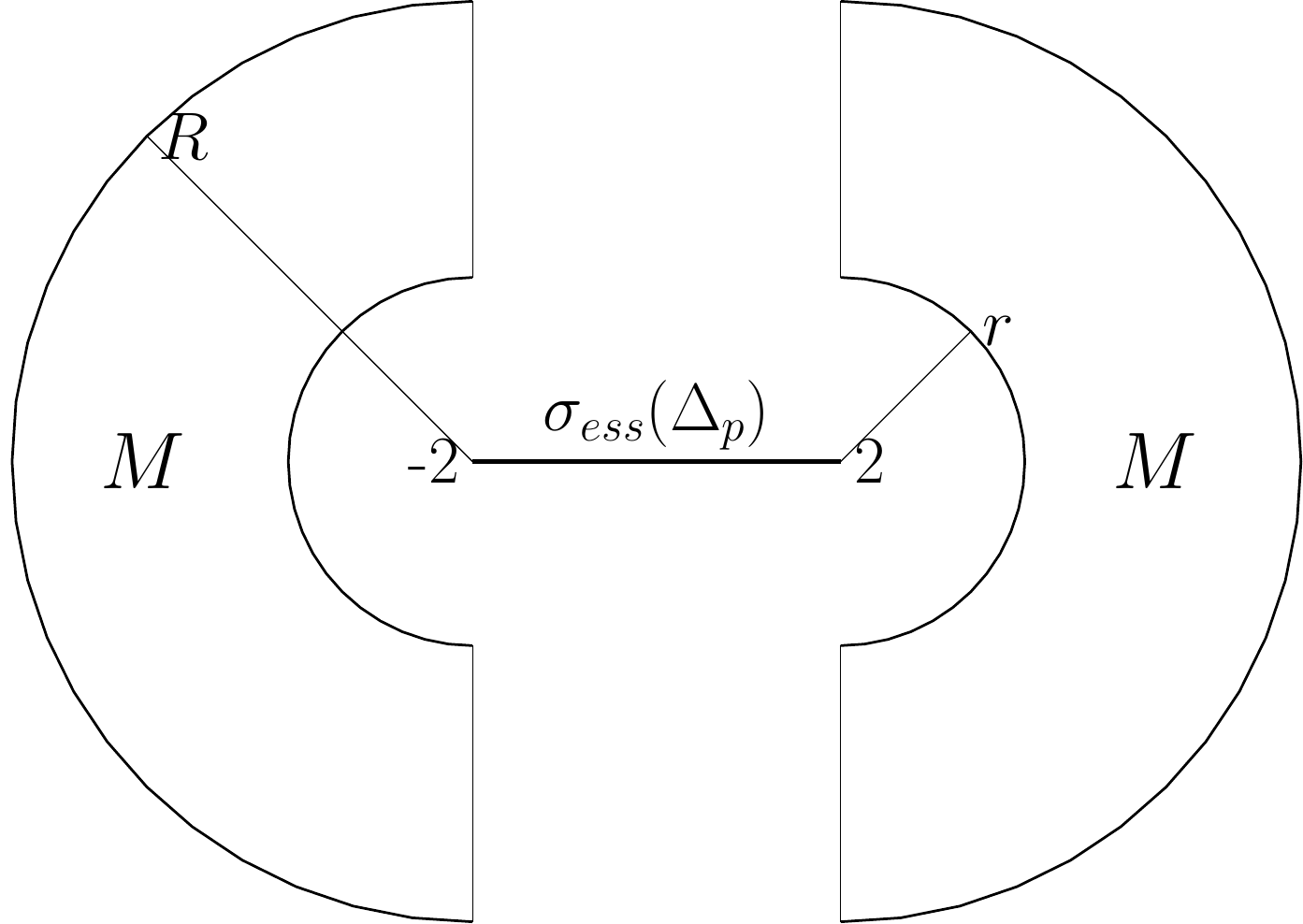}
\caption{Region $M$ with \newline $\|Z\|-2>R>r>2$.}
\end{minipage}
\begin{minipage}{6cm}
\includegraphics[height=4.5cm]{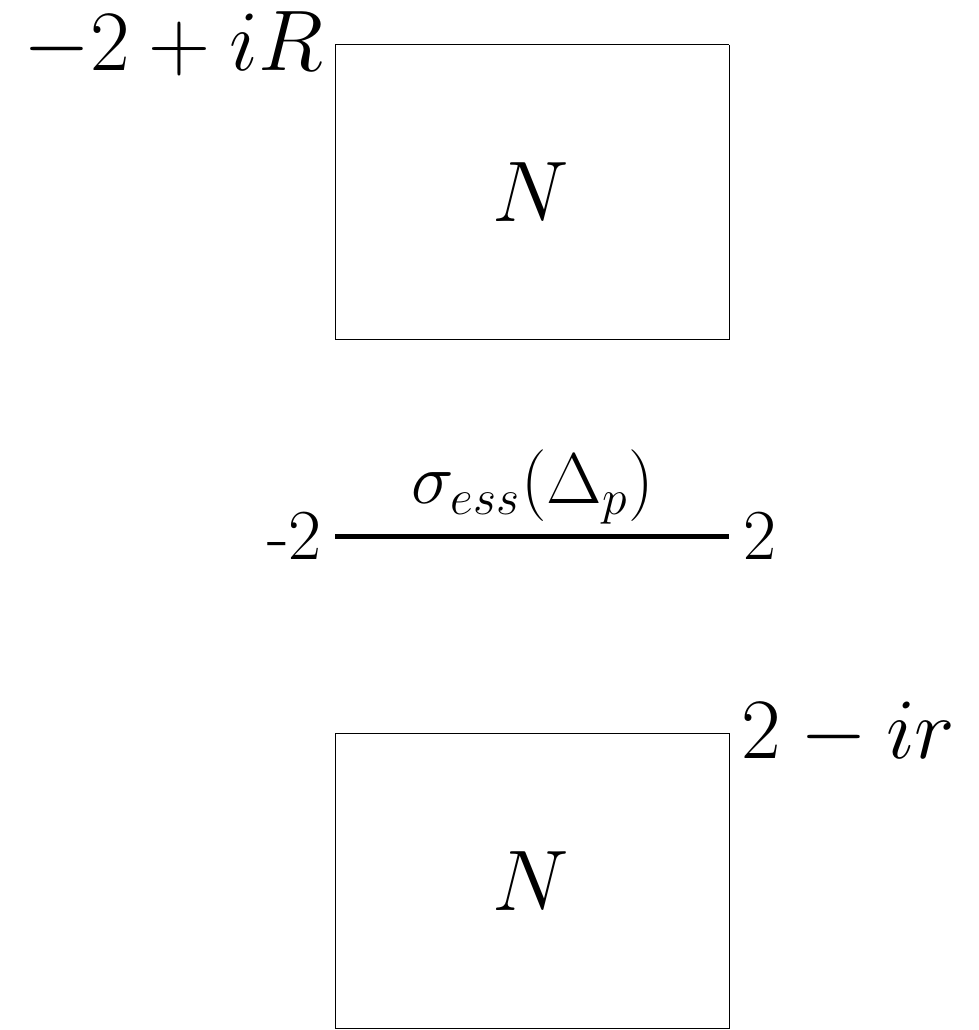}
\caption{Region $N$ with \newline $\|Z\|\geq R>r>0$.}
\end{minipage}
\end{figure}
with $\|Z\|\geq R>r>0$ we receive for every $\lambda \in N$ the inequalities
\begin{align*}
\text{dist}(\lambda,[-2,2])>r,\, \frac{1}{|\lambda+2|}>\frac{1}{\sqrt{16+R^2}},\, \frac{1}{|\lambda-2|}>\frac{1}{\sqrt{16+R^2}}.
\end{align*}
By the same argumentation we get for every $\tau>0$
\begin{align*}
\sum_{\lambda\in\sigma_{disc}(Z)\cap N}\frac{r^{3+\tau}}{16+R^2}\leq C(\tau)\|K\|_{\NN}.
\end{align*}
Hence for the number of eigenvalues in $N$
\begin{align*}
\#\big(\sigma_{disc}(Z)\cap N\big)\leq \frac{16+R^2}{r^{3+\tau}}\|K\|_{\NN}^2C(\tau).
\end{align*}

\end{remark}
\begin{remark}
If we take a sequence $\{\lambda_k\}_{k\in\Natural}\subseteq \sigma_{disc}(Z)$ converging to some $\lambda_0\in[-2,2]$ we can consider the following cases (restricting the sequence to a subsequence):\\
\qquad (i.a) $\lambda_0 =-2$ and Re($\lambda_k)\leq -2$. \qquad (i.b) $\lambda_0=2$ and Re($\lambda_k)\geq 2$.\\
\qquad (ii.a) $\lambda_0 =-2$ and Re($\lambda_k)> -2$. \qquad (ii.b) $\lambda_0=2$ and Re($\lambda_k)< 2$.\\
\qquad (iii) $\lambda_0 \in(-2,2)$.
\\Because of the symmetry it is sufficient to consider only the cases (i.a), (ii.a) and (iii). \\
In case (i.a) the estimate (\ref{lti}) implies for $\tau>0$. 
\begin{align*}
\sum_{k=1}^\infty |\lambda_k+2|^{2+\tau}<\infty,
\end{align*}
in (ii.a)
\begin{align*}
\sum_{k=1}^\infty \frac{|\text{Im}(\lambda_k)|^{3+\tau}}{|\lambda_k+2|}<\infty
\end{align*}
and finally in (iii)
\begin{align*}
\sum_{k=1}^\infty |\text{Im}(\lambda_k)|^{3+\tau}<\infty.
\end{align*}
\end{remark}

\section{Nuclear perturbations of the multiplication operator on $C[\alpha,\beta]$}

As another easy application, consider $X=C[\alpha,\beta]$ (the space of continuous functions). We define 
\begin{align*}
Z_0:X\rightarrow X \text{ with } (Z_0f)(t):=M(t)f(t),
\end{align*}
where $M$ is a real-valued contiuous function on $[\alpha,\beta]$. \\
We know $Z_0\in\mathcal{B}(X), \sigma(Z_0)=\sigma_{ess}(Z_0)=[\min(M),\max(M)]=:[a,b]$ and 
\begin{align*}
\big(R_{Z_0}(\lambda)f\big)(x)=\frac{f(x)}{M(x)-\lambda},\, \lambda\in\rho(Z_0).
\end{align*}
In this example it is possible to compute the operator norm of the resolvent exactly and we receive
\begin{align*}
\|R_{Z_0}(\lambda)\|=\|\frac{1}{M-\lambda}\|_\infty=\frac{1}{\text{dist}(\lambda,[a,b])}.
\end{align*}
Defining an integral operator 
\begin{align*}
K:X\rightarrow X\text{ with } Kf(t):=\int_\alpha^\beta k(t,s)f(s)ds
\end{align*}
with $k\in C[\alpha,\beta]^2$ we know by Example \ref{beispiel} that this operator is nuclear.\\
Then for 
\begin{align*}
Z:=Z_0+K
\end{align*}
the function
\begin{align*}
d(\lambda):=\det\big(1-R_{Z_0}(\lambda)K\big), \, \lambda \in \rho(Z_0)
\end{align*}
defines a holomorphic function with zero-set equal to $\sigma_{disc}(Z)$ and 
\begin{align*}
|d(\lambda)|\leq \frac{1}{2}\|K\|_\mathcal{N}^2\frac{1}{\text{dist} {(\lambda,[a,b])^2}}.
\end{align*}
Setting $\phi(w):=\frac{b-a}{4}(w+w^{-1}+2),\, w\in\mathbb{D}\setminus\{0\}$ ($\phi$ maps $\mathbb{D}\setminus \{0\}$ to $\mathbb{C}\setminus[a,b]$) we receive (for the holomorphic function $d\circ \phi$)
\begin{align*}
|(d\circ \phi) (w)|\leq \frac{1}{2}\|K\|_\mathcal{N}^2\frac{1}{\text{dist}(\phi(w),[a,b])^2} .
\end{align*}
Using the estimate 
\begin{align*}
\frac{b-a}{8}\frac{|w^2-1|(1-|w|)}{|w|}\leq \text{dist}(\phi(w),[a,b])\leq \frac{(b-a)(1+\sqrt{2})}{8}\frac{|w^2-1|(1-|w|)}{|w|}
\end{align*}
with  $w\in\mathbb{D}\setminus\{0\}$,
which is a generalization of the estimate in Lemma \ref{lemma4.2} (see \cite{demuth}, Lemma 4.2.1) we obtain
\begin{align}
|d\circ\phi(w)|\leq \frac{1}{2}C(a,b)\|K\|_\mathcal{N}^2\frac{|w|^2}{|w^2-1|^2(1-|w|)^2} \label{eq5.1}
\end{align}
and so by Theorem \ref{theorem3.7}
\begin{align*}
\sum_{w\in\mathcal{Z}(d\circ\phi)}\frac{(1-|w|)^{3+\tau}}{|w|^{1+\tau}}|w^2-1|^{1+\tau}\leq C(\tau,a,b)\|K\|_\mathcal{N}^2.
\end{align*}
In analogy to the proof of Theorem \ref{theorem4.3} we can deduce:
\begin{theorem} Let $Z=Z_0+K$ defined as described above, then 
\begin{align*}
\sum_{\lambda\in\sigma_{disc}(Z)}\frac{\text{dist}(\lambda,[a,b])^{3+\tau}}{|\lambda-a||\lambda-b|}\leq C(\tau,a,b)\|K\|_\mathcal{N}.
\end{align*}
\end{theorem}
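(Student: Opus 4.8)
The plan is to carry the argument of Theorem~\ref{theorem4.3} over essentially verbatim, the only genuinely new ingredient being the algebra that links the weight $\frac{\textnormal{dist}(\lambda,[a,b])^{3+\tau}}{|\lambda-a||\lambda-b|}$ to the corresponding weight in the variable $w$. The starting point is the inequality already obtained from (\ref{eq5.1}) and Theorem~\ref{theorem3.7},
\begin{align*}
\sum_{w\in\mathcal{Z}(d\circ\phi)}\frac{(1-|w|)^{3+\tau}}{|w|^{1+\tau}}|w^2-1|^{1+\tau}\leq C(\tau,a,b)\|K\|_\mathcal{N}^2 .
\end{align*}
Since $\phi$ maps $\mathbb{D}\setminus\{0\}$ bijectively onto $\Complex\setminus[a,b]$ and, by Theorem~\ref{corollary3.7}, carries the zeros of $d\circ\phi$ (with their orders) onto the discrete eigenvalues of $Z$ (with their algebraic multiplicities), it suffices to bound each summand on the left from below by a constant times $\frac{\textnormal{dist}(\phi(w),[a,b])^{3+\tau}}{|\phi(w)-a||\phi(w)-b|}$ and then sum.

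First I would rewrite the summand in the factored form
\begin{align*}
\frac{(1-|w|)^{3+\tau}|w^2-1|^{1+\tau}}{|w|^{1+\tau}}=\bigg(\frac{(1-|w|)|w^2-1|}{|w|}\bigg)^{3+\tau}\bigg|\frac{w}{w^2-1}\bigg|^{2},
\end{align*}
exactly as in the proof of Theorem~\ref{theorem4.3}. The first factor is handled by the two-sided distance estimate quoted before (\ref{eq5.1}), which gives $\frac{(1-|w|)|w^2-1|}{|w|}\geq \frac{8}{(b-a)(1+\sqrt{2})}\,\textnormal{dist}(\phi(w),[a,b])$ and hence a lower bound $\big(\tfrac{8}{(b-a)(1+\sqrt 2)}\big)^{3+\tau}\textnormal{dist}(\phi(w),[a,b])^{3+\tau}$ for its $(3+\tau)$-th power.

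For the second factor I would compute the analogue of the identity $|z^2-4|=|\tfrac{w^2-1}{w}|^2$ from Lemma~\ref{lemma4.2}. Writing $\phi(w)=\frac{b-a}{4}(w+w^{-1}+2)+a$ one finds $\phi(w)-a=\frac{b-a}{4}\frac{(w+1)^2}{w}$ and $\phi(w)-b=\frac{b-a}{4}\frac{(w-1)^2}{w}$, whence
\begin{align*}
\bigg|\frac{w}{w^2-1}\bigg|^{2}=\Big(\tfrac{b-a}{4}\Big)^{2}\frac{1}{|\phi(w)-a|\,|\phi(w)-b|}.
\end{align*}
Combining the two factors yields, with $\lambda=\phi(w)$,
\begin{align*}
\frac{(1-|w|)^{3+\tau}|w^2-1|^{1+\tau}}{|w|^{1+\tau}}\geq c(\tau,a,b)\,\frac{\textnormal{dist}(\lambda,[a,b])^{3+\tau}}{|\lambda-a|\,|\lambda-b|}
\end{align*}
with a strictly positive constant $c(\tau,a,b)$. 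Summing over $w\in\mathcal{Z}(d\circ\phi)$ and inserting the displayed bound then produces the claimed inequality (with $\|K\|_\mathcal{N}^2$ on the right, exactly as in Theorem~\ref{theorem4.3}).

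I expect no serious obstacle: once the factorization and the multiplication identity above are in place, the whole argument is a transcription of Theorem~\ref{theorem4.3}. The only points requiring a little care are verifying the two-sided distance estimate in the present normalization (the factor $\frac{b-a}{8}$ in place of $\frac12$, which is the generalization of Lemma~\ref{lemma4.2} recorded before (\ref{eq5.1})) and checking that the order-to-multiplicity correspondence of Theorem~\ref{corollary3.7} applies here as well, so that the sum over $\mathcal{Z}(d\circ\phi)$ faithfully reproduces the sum over $\sigma_{disc}(Z)$ with the correct counting.
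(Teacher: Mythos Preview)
Your proposal is correct and follows exactly the approach the paper intends: the paper's own proof consists of the single sentence ``In analogy to the proof of Theorem~\ref{theorem4.3} we can deduce'', and your argument is precisely that analogy spelled out, including the generalized identity $|\phi(w)-a||\phi(w)-b|=\big(\tfrac{b-a}{4}\big)^2\big|\tfrac{w^2-1}{w}\big|^2$ that replaces $|z^2-4|=\big|\tfrac{w^2-1}{w}\big|^2$ from Lemma~\ref{lemma4.2}. Your observation that the right-hand side should carry $\|K\|_{\mathcal N}^2$ rather than $\|K\|_{\mathcal N}$ is also apt; the same discrepancy already appears in the statement of Theorem~\ref{theorem4.3} versus its proof.
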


\newpage
\section*{Acknowledgement}
The idea to use this complex analysis method for studying the discrete spectrum of operators in Banach spaces perturbed by nuclear operators goes back to discussions with Guy Katriel and Marcel Hansmann. Their critical reading of the manuscript has improved considerably the article. We are grateful to both of them. Also we thank Friedrich Philipp for very valuable discussions.


\begin{thebibliography}{9}
\bibitem{borichev} A. Borichev, L. Golinskii \& S. Kupin, A Blaschke-Type condition and its application to complex Jacobi matrices. {\it Bull. London Math. Soc.} {\bf 41} (2009), 117-123.

\bibitem{bruneau} V. Bruneau and E.M. Ouhabaz, Lieb-Thirring estimates for non-self-adjoint Schr\"odinger operators. {\it J. Math. Phys.}, (9) {\bf 49}:093504 (2008).

\bibitem{davies} E.B. Davies, {\it Linear operators and their spectra.} 
Cambridge University Press, Cambridge 2008.

\bibitem{demuth} M. Demuth, M. Hansmann and G. Katriel,
Eigenvalues of non-selfadjoint operators: a comparison of two approaches. {\it Operator Theory: Advances and Applications}, {\bf 232} (2013), 107-163.

\bibitem{duren} P.L. Duren, On the spectrum of a Toeplitz operator. {\it Pacific J. Math.} {\bf 14}, (1964) 21-29.

\bibitem{favorov} S. Favorov and L. Golinskii, Blaschke-type conditions in unbounded domains, generalized convexity and applications in perturbation theory. {\it Comput. Methods Funct. Theory}  (1) {\bf 12} (2012), 151–166.

\bibitem{frank} R.L. Frank, Eigenvalue bounds for Schr\"odinger operators with complex potentials.  {\it Lett. Math. Phys.}, (3) {\bf 77}, (2006) 309-316.

\bibitem{gohberg1} I.C. Gohberg, S. Goldberg and M.A. Kaashoek, {\it Classes of linear operators Vol. 1.}
Birkh\"auser Verlag, Basel 1990.

\bibitem{gohberg} I.C. Gohberg, S. Goldberg and N. Krupnik, {\it Traces and determinants of linear operators.} 
Birkh\"auser Verlag, Basel 2000.

\bibitem{golinskii} L. Golinskii and S. Kupin, Lieb-Thirring bounds for complex Jacobi matrices. {\it Lett. Math. Phys.}, (1) {\bf 82}, (2007) 79-90.

\bibitem{hansmann} M. Hansmann, On the discrete spectrum of linear operators in Hilbert spaces. Dissertation, TU Clausthal, 2010. See http://nbn-resolvin.de/urn:nbn:de:gbv:1041097281 for an electronic version.

\bibitem{hansmann2011} M. Hansmann, An eigenvalue estimate and its application to non-selfadjoint Jacobi and Schr\"odinger operators. {\it Lett. Math. Phys.} (1) {\bf 98} (2011), 79-95.

\bibitem{hansmann2012} M. Hansmann, Variation of discrete spectra for non-selfadjoint perturbations of selfadjoint operators. {\it Integr. Equ. Oper. Theory} {\bf 76} (2013), 163-178.

\bibitem{hansmannkatriel} M. Hansmann and G. Katriel, Inequalities for the eigenvalues of non-selfadjoint Jacobi operators. {\it Complex Anal. Oper. Theory}, (1) {\bf 5} (2011), 197-218.

\bibitem{kato} T. Kato,
{\it Perturbation theory for linear operators.} Springer-Verlag, Berlin 1995.

\bibitem{laptev} A. Laptev and O. Safronov, 
Eigenvalue estimates for Schr\"odinger operators with complex valued potentials. 
{\it Comm. Math. Phys.}, (1) {\bf 292} (2009), 29-54.

\bibitem{nevanlinna} R. Nevanlinna, {\it Eindeutige analytische Funktionen.}, Springer-Verlag,  Berlin 1953.

\bibitem{pietsch} A. Pietsch, {\it Eigenvalues and s-numbers.} Cambrdige University Press, Cambridge 1987.

\bibitem{remmert} R. Remmert and G. Schumacher, {\it Funktionentheorie I.} Springer, Berlin - Heidelberg, 6. edition, 2002.

\bibitem{rudin} W. Rudin, {\it Real and complex analysis.} McGraw-Hill book Co., New York, third edition, 1987.

\bibitem{safronov} O. Safronov, Estimates for eigenvalues of the Schr\"odinger operator with a complex potential. {\it Bull. Lond. Math. Soc}, (3) {\bf 42} (2010), 452-456.

\bibitem{simon} B. Simon, Notes on infinite determinants of Hilbert space operators. {\it Advances in Math.} (3) {\bf 24} (1977), 244-273.
\end{thebibliography}
\end{document}